\newtheorem{thm}{Theorem}[section]
\newtheorem{prop}[thm]{Proposition}
\newtheorem{lem}[thm]{Lemma}
\newtheorem{defn}[thm]{Definition}
\newtheorem{cor}[thm]{Corollary}
\newcommand{\be}{\beta}
\newcommand{\de}{\delta}
\newcommand{\ep}{\epsilon}
\newcommand{\Om}{\Omega}
\newcommand{\cA}{\mathcal A}
\newcommand{\cF}{\mathcal F}
\newcommand{\cG}{\mathcal G}
\newcommand{\cL}{\mathcal L}
\newcommand{\cM}{\mathcal M}
\newcommand{\cN}{\mathcal N}
\newcommand{\cS}{\mathcal S}
\newcommand{\cT}{\mathcal T}
\newcommand{\tP}{\widetilde{P}}
\newcommand{\bC}{\mathbb C}
\newcommand{\bE}{\mathbb E}
\newcommand{\bZ}{\mathbb Z}
\newcommand{\lbl}[1]{\label{#1}}
\newcommand{\bcT}{b\cT}
\title{Generating Product Systems}
\author{Nir Avni\footnote{Department of Mathematics, Harvard University, Cambridge MA 02138. Email address: avni.nir@gmail.com}, Benjamin Weiss\footnote{Institute of Mathematics, The Hebrew University of Jerusalem, Jerusalem 91904 Israel. Email address: weiss@math.huji.ac.il}}
\begin{document}
\maketitle{}

\abstract{Generalizing Krieger's finite generation theorem, we give conditions for an ergodic system to be generated by a pair of partitions, each required to be measurable with respect to a given sub-algebra, and also required to have a fixed size.}

\section{Introduction}

Let $(Z,\cF_Z,\mu)$ be a probability space, and let $T:Z\to Z$ be a measurable, probability preserving transformation of $Z$. If $\mu$ is ergodic with respect to $T$, we say that $(T,\cF_Z,\mu,T)$ is an ergodic system.

A partition $P$ of $Z$ is called generating if the sigma-algebra generated by $\bigvee_{-\infty}^{\infty}T^iP$ and the sigma-algebra of null and co-null sets is equal to $\cF_Z$. If $P$ has $a$ parts, then for any $\bigvee T^iP$-measurable partition $Q$, the entropy $h(T,Q)$ is less than or equal to $\log a$. Hence, if $h(T)>\log a$, then there are no generating partitions with $a$ parts. In \cite{Kr}, Krieger proved the following partial converse:

\begin{thm}[Krieger] \lbl{thm:Krieger} Let $(Z,\cF_Z,\mu,T)$ be an ergodic system, and let $a$ be an integer such that $\log a>h(T)$. Then there is a generating partition $P$ of $Z$ that has $a$ parts.
\end{thm}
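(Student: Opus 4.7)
I would start from a countable generating partition $R$, which exists in any Lebesgue probability space, and produce the $a$-part partition $P$ as a limit of an iterative construction. The slack $\log a - h(T)$ enters only through counting: fixing $\de>0$ with $h(T)+\de<\log a$, the Shannon--McMillan--Breiman theorem guarantees that for all sufficiently large $n$ the $R$-$n$-names carrying all but $\ep$ of the measure number at most $e^{n(h(T)+\de)}$, which is strictly smaller than $a^n$.

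\textbf{Single-step move.} The basic tool is a Rokhlin tower of height $n$ with base $F$ whose levels $F,TF,\ldots,T^{n-1}F$ cover more than $1-\ep$ of $Z$. I would group the points of $F$ by their $R$-$n$-name $(R(x),R(Tx),\ldots,R(T^{n-1}x))$, keeping only the typical names; by the counting above one can injectively encode these names by distinct words $v\in\{1,\ldots,a\}^n$ and use $v$ to define $P$ along the corresponding column. On a set of measure close to $1$, the $P$-block of length $n$ read from $x$ then determines the $R$-block, and in particular determines which atom of $R$ contains $x$.

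\textbf{Iteration.} To upgrade this approximate move into a genuine generator I would fix a countable family $\{B_k\}$ dense in $\cF_Z$ and build $P_k$ inductively by modifying $P_{k-1}$ only inside a Rokhlin tower of height $n_k\gg n_{k-1}$ whose complement has measure less than $2^{-k}$, arranging both that $\mu(P_k\triangle P_{k-1})<2^{-k}$ and that $B_1,\ldots,B_k$ become measurable with respect to $\bigvee_{|i|\le n_k}T^iP_k$ up to error $2^{-k}$. Summability of $2^{-k}$ yields a limit partition $P$ with $a$ parts, and a Borel--Cantelli argument shows $\bigvee_{i\in\bZ}T^iP=\cF_Z$ mod null sets, so $P$ generates.

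\textbf{Main obstacle.} The hard step is the inductive modification: the new labeling at stage $k$ must extend the old $P_{k-1}$-names on the levels of the new tower that already lay inside the old tower, \emph{and} simultaneously encode enough of $B_1,\ldots,B_k$ to make them measurable, all within the same $a$-letter alphabet. This is precisely where the counting room $\log a-h(T)>\de$ is consumed, and the careful bookkeeping for this tower-within-tower matching is the combinatorial heart of Krieger's argument.
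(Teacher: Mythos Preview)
The paper does not contain a proof of Theorem~\ref{thm:Krieger}. Krieger's theorem is stated there as a known result, attributed to \cite{Kr}, and is then used as a black box in the proof of the main theorem (to obtain the auxiliary generating partitions $\tP_X$ and $\tP_Y$ at the start of Section~\ref{sec:proof}). So there is nothing in the paper to compare your proposal to directly.

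That said, your outline is a faithful sketch of the standard proof, and it is worth noting that the machinery the paper develops for its main result is essentially the two-variable elaboration of exactly the ideas you describe. Your ``single-step move'' is what the paper calls \emph{painting} (Definition preceding Lemma~\ref{lem:painting.base.entropy}): labelling columns of a Rohlin tower by admissible $a$-ary words chosen so that typical names are encoded injectively, with the Shannon--McMillan count (Theorem~\ref{thm:erg.SM.castles}) providing the room. Your ``iteration'' step, where you modify $P_{k-1}$ only on a small portion of a much taller tower while preserving most of the old labelling, is the paper's \emph{repainting} (Definition preceding Lemma~\ref{lem:repainting.close} and Lemma~\ref{lem:repainting.base.entropy}). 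The paper handles the bookkeeping you flag as the ``main obstacle'' by reserving an $\ep$-fraction of the levels of the new tower for fresh information and leaving the remaining $(1-\ep)$-fraction untouched, then appealing to Lemma~\ref{lem:linearity.of.expectation} to show that a \emph{random} assignment of the new labels is, with high probability, injective on the relevant set of names. This random-coding device replaces the explicit combinatorial matching you allude to, and is the main technical difference from the classical argument you sketch.

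One small point: in your iteration you aim to make fixed sets $B_1,\ldots,B_k$ approximately measurable. The paper (and most modern treatments) instead tracks approximate containment of a fixed generating partition, $\tP\stackrel{\ep}{\subset}\bigvee T^iP_k$, which is equivalent but cleaner to carry through the estimates.
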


The case where $h(T)$ is equal to $\log a$ is different; there is a unique ergodic system for which there is a generating partition with $a$ parts (namely, the Bernoulli system).

Krieger's theorem can be thought of as an infinite version of Shannon's coding theorem for noiseless channels. In Shannon's theorem the block size is finite and the error remains positive but converges to  zero as as the size of the block grows. Shannon's theorem for more general channels was also given a zero error version using inifinite codes, as is usual in ergodic theory in a series of papers including \cite{GOD} and \cite{Ki}

In information theory there is a surprising  extension of Shannon's coding theorem to correlated sources due to Slepian and Wolf, \cite{SW}. Our main result is a zero error version of the Slepian--Wolf result for noiseless channels. Combining the techniques of Kieffer with those of this paper would lead to a zero error version of the Slepian--Wolf theorem in the context of noisy channels, but we leave these extensions for the future.

In order to describe the result, consider the following scenario: Suppose that $\cF_X,\cF_Y\subset\cF_Z$ are sub-sigma-algebras and that $\cF_Z=\cF_X\vee \cF_Y$ (without loss of generality, we may assume that $Z=X\times Y$ is a product space, and that $\cF_X,\cF_Y$ are the corresponding sigma-algebras). We seek a pair of partitions $P_X,P_Y$ such that $P_X$ is $\cF_X$-measurable and has $a$ parts, $P_Y$ is $\cF_Y$-measurable and has $b$ parts, and the partition $P_X\vee P_Y$ generates. There are three necessary conditions for such partitions to exist: the first is that the conditional entropy of $T$ with respect to $\cF_Y$, which we denote by $h(T|\cF_Y)$, is less than or equal to $\log a$, the second condition is that $h(T|\cF_X)\leq\log b$, and the third condition is that $h(T)\leq\log a+\log b$. As in Theorem \ref{thm:Krieger}, we show that these conditions are almost sufficient:

\begin{thm} \lbl{thm:main} Let $(Z,\cF_Z,m,T)$ be an ergodic and probability preserving system, let $\cF_X,\cF_Y$ be invariant sub-sigma-algebras of $\cF_Z$, and let $a$ and $b$ be integers. Assume that $\log a>h(T|\cF_Y)$, that $\log b>h(T|\cF_X)$, and that $\log a+\log b>h(T)$. Then there are partitions $P_X$ and $P_Y$ of $Z$ such that
\begin{enumerate}
\item $P_X$ is measurable with respect to $\cF_X$ and has $a$ parts.
\item $P_Y$ is measurable with respect to $\cF_Y$ and has $b$ parts.
\item $P_X\vee P_Y$ generates the sigma-algebra $\cF_X\vee\cF_Y$.
\end{enumerate}
\end{thm}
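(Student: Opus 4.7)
The plan is to extend Krieger's inductive Rokhlin-tower construction from one generating partition to a pair of partitions satisfying sub-$\sigma$-algebra constraints. Fix increasing sequences $R_X^{(n)} \nearrow \cF_X$ and $R_Y^{(n)} \nearrow \cF_Y$ of finite partitions generating the two sub-algebras. The aim is to construct inductively finite partitions $P_X^{(n)}$ and $P_Y^{(n)}$ of sizes $a$ and $b$, measurable with respect to $\cF_X$ and $\cF_Y$ respectively, so that $\bigvee_i T^i(P_X^{(n)} \vee P_Y^{(n)})$ contains $R_X^{(n)} \vee R_Y^{(n)}$ up to a set of measure at most $\varepsilon_n$; choosing $\varepsilon_n$ summable forces the sequences to converge to the desired $P_X, P_Y$, with generation passing to the limit.

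The combinatorial heart of the construction is a relative Shannon--McMillan count matching the Slepian--Wolf rate region: for large $N$, the number of typical $\cF_X$-names of length $N$ conditional on a fixed $\cF_Y$-name is at most $2^{N(h(T|\cF_Y)+o(1))}$, which is strictly less than $a^N$ by the first hypothesis. Symmetrically for the second hypothesis, and the third hypothesis $\log a + \log b > h(T)$ keeps the total typical joint-name count strictly below $(ab)^N$. These three independent entropy gaps give ``slack'' for three separate injection requirements: an $\cF_X$-measurable $a$-ary coding of the $\cF_X$-names, an $\cF_Y$-measurable $b$-ary coding of the $\cF_Y$-names, and joint decodability of the paired codewords, so that $(P_X \vee P_Y)$-names determine $(R_X \vee R_Y)$-names.

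At each inductive step a tall Rokhlin tower of height $N_n$ is chosen, and $P_X^{(n-1)}, P_Y^{(n-1)}$ are redefined on its interior levels to implement the codings above, leaving them unchanged off the tower. A few top levels are reserved for a synchronization marker that lets the base of the tower be recovered from $P_X \vee P_Y$; this is the subtle step, since $\cF_X \cap \cF_Y$ may be essentially trivial, so the marker has to be a correlated pattern of symbols that is separately $\cF_X$- and $\cF_Y$-measurable but whose joint occurrence is distinctive. The main obstacle is sustaining all three entropy budgets simultaneously through the induction while respecting the sub-$\sigma$-algebra constraints and ensuring marker recoverability from the joint partition; the strict inequalities in the hypotheses are precisely what supplies the constant slack needed for the construction to close up at each stage and converge.
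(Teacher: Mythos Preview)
Your high-level strategy (Krieger-style iteration on Rokhlin towers, conditional Shannon--McMillan counts matching the Slepian--Wolf region, Cauchy convergence of successive approximants) is indeed the paper's strategy. But there is a real gap in the measurability bookkeeping, and it shows up exactly where you flag ``the subtle step.'' You propose to take \emph{one} tall tower at stage $n$ and redefine both $P_X^{(n-1)}$ and $P_Y^{(n-1)}$ on its levels. For $P_X^{(n)}$ to be $\cF_X$-measurable, the value $P_X^{(n)}(z)$ must depend only on the $\cF_X$-coordinate of $z$; in particular the function ``which level of the tower am I on'' must be $\cF_X$-measurable, i.e.\ the tower base must lie in $\cF_X$. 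Symmetrically, modifying $P_Y$ along the same tower forces the base to lie in $\cF_Y$. Since $\cF_X\cap\cF_Y$ may be trivial, a single tower generally cannot serve both roles, and your proposed ``correlated pattern'' marker does not address this: the obstruction is not how to \emph{read} the base from $P_X\vee P_Y$, but how to \emph{write} $\cF_X$- and $\cF_Y$-measurable partitions along a common tower in the first place.

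The paper resolves this by never using a joint tower. It takes an $\cF_Y$-measurable tower $\cS$ on which $P_Y$ is painted (so $P_Y$ is automatically $\cF_Y$-measurable and the base of $\cS$ is recoverable from $P_Y$ alone via an $\ell$-admissible marker), and a separate $\cF_X$-measurable tower $\cL$ on which $P_X$ is painted. The iterative improvement step likewise modifies only \emph{one} partition at a time, on a tower measurable for its own sub-algebra, holding the other partition fixed. Decoding then proceeds in two passes: from $P_X$ one locates the $\cL$-base; a random-binning argument (a Slepian--Wolf style collision bound, Lemma~\ref{lem:linearity.of.expectation} in the paper) shows that the $(P_X,Q_Y)$-name on $\cL$ pins down $\tP_X$ on most points; then on the $\cF_Y$-tower $\cS$ the pair $(\tP_X,P_Y)$ pins down $\tP_Y$. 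So the marker problem is easier than you suggest (each partition carries its own synchronization, no joint marker is needed), while the tower problem is harder than your outline allows for. Your sketch becomes correct once you split the construction into two interleaved towers and alternate the repainting between $P_X$ and $P_Y$; the entropy slack you identified is exactly what the paper uses, but it is spent across the two towers rather than on one.
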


More generally, it is straightforward to generalize Theorem \ref{thm:main} to the case of $n$ partitions. Namely, assume that there are $n$ sigma-algebras, $\cF_1,\ldots,\cF_n$, and $n$ integers $a_1,\ldots,a_n$. If for every non-empty subset $S\subset\{1,\ldots,n\}$,
\[
h\left(T|\bigvee_{i\notin S}\cF_i\right)< \sum_{i\in S}\log a_i,
\]
then there are partitions $P_1,\ldots,P_n$ such that $P_i$ has $a_i$ parts and is $\cF_i$-measurable, and that $P_1\vee\ldots\vee P_n$ generates.

The proof of Theorem \ref{thm:main} is given in Section \ref{sec:proof}, and consists of two parts. In the first part (Proposition \ref{prop:first.approximation}) we show that there are partitions $P_X,P_Y$ that satisfy the first two requirements, and such that the partition $P_X\vee P_Y$ is close to being a generating partition (in the sense that the sigma-algebra generated by it contains a partition which is close to a fixed generating partition of $Z$). This is done by a procedure of `painting names along Rohlin towers'. In the second part (Proposition \ref{prop:successive.approximation}) we show that one can slightly change these partitions and get partitions that are even closer to being generating. The proof of Proposition \ref{prop:successive.approximation} uses the procedure of `re-painting names along Rohlin towers'. Applying Proposition \ref{prop:successive.approximation} inductively, we get a Cauchy sequence of partitions that converges to a generating partition, which proves Theorem \ref{thm:main}.

The procedures of painting and repainting along Rohlin towers are explained in Section \ref{sec:painting}. Finally, Section \ref{sec:prelim} contains several auxiliary results, the important one is a corollary of the Shannon-McMillan theorem. No novelty is claimed for this material; it is assembled for the reader's convenience. 

\section{Preliminaries} \lbl{sec:prelim}

\begin{defn} Let $(Z,\cF_Z,\mu,T)$ be an ergodic system.
\begin{enumerate}
\item A (Rohlin-) tower is a pair $\cT=(A,M)$, where $A$ is a subset of $Z$, and $M$ is a natural number, such that for any $0<i<M$, the sets $A$ and $T^iA$ are disjoint. 
\item If $\cT=(A,M)$ is a tower, the set $A$ is called the base of the tower, and is denoted by $\bcT$; the number $M$ is called the height of the tower, and is denoted by $|\cT|$.
\item If $\ep$ is a real number, we say that a tower $\cT=(A,M)$ covers more than $\ep$ of $Z$ if $\mu(A\cup TA\cup\ldots\cup T^{M-1}A)>\ep$.
\item If $\cG$ is an invariant sub-sigma-algebra, we say that $\cT$ is $\cG$-measurable if $A$ is $\cG$-measurable.
\end{enumerate}
\end{defn}

\begin{defn} Let $(Z,\cF_Z,\mu,T)$ be an ergodic system, and let $k$ be an integer. A partition of $Z$ into $k$ parts is a function $P:Z\to\{\overline{0},\ldots,\overline{k-1}\}$. Given a partition $P$ into $k$ parts, we denote the number $k$ by $|P|$.
\end{defn}

\begin{defn} Let $(Z,\cF_Z,\mu,T)$ be an ergodic system, let $\cT=(A,M)$ be a tower, and let $P$ be a partition of $Z$. For every $z\in A$, the $(\cT,P)$-name of $z$ is the tuple $(P(z),\ldots,P(T^{M-1}z))$. We denote the $(\cT,P)$-name of $z$ by $\nu_{\cT,P}(z)$.
\end{defn}

For a proof of the following theorem, see \cite{Ru}.

\begin{thm} \lbl{thm:erg.SM.castles} Let $(Z,\cF_Z,\mu,T)$ be an ergodic system, let $f:Z\to\bC$ be a bounded function, and let $P$ be a partition of $Z$. For every $\ep>0$ there is a natural number $N$ such that if $\cT$ is a tower of height greater than $N$ that covers more than $\ep$ of the space, then
\begin{enumerate}
\item The set of points $z$ in the base of $\cT$ that satisfy
\[
\left|\frac{1}{|\cT|}\left(f(z)+f(Tz)+\ldots+f(T^{|\cT|-1})\right)-\int_Zfd\mu\right|<\ep
\]
has measure greater than $(1-\ep)\mu(A)$.
\item There is a collection $\cN$ of $2^{(h(Z,P)+\ep)|\cT|}$ elements in $|P|^{|\cT|}$ such that 
\begin{enumerate}
\item For every $n=(n_i)\in\cN$, the measure of the set of $z$'s in $A$ whose $(\cT,P)$-name is equal to $n$ is between $2^{-(h(Z,P)+\ep)|\cT|}$ and $2^{-(h(Z,P)-\ep)|\cT|}$.
\item The measure of the set of $z$'s in $A$ whose $(\cT,P)$-name is in $\cN$ is greater than $(1-\ep)\mu(A)$.
\end{enumerate}
\end{enumerate}
\end{thm}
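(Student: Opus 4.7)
Both parts are consequences of standard convergence theorems---Birkhoff's pointwise ergodic theorem for (1) and the Shannon--McMillan--Breiman theorem for (2)---combined with a ``propagation of badness'' argument that exploits the disjointness of the tower levels $T^0A,\ldots,T^{M-1}A$. In both parts the central mechanism is the same: if some ergodic or entropic quantity at $z\in A$ is bad at time $M$, then the same quantity at $T^kz$ is bad (with slightly relaxed tolerance) for a range of $k$ proportional to $M$; disjointness of the levels then forces the bad subset of $A$ to be small.

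For part (1), set $C=\sup_Z|f|$ and $\bar f=\int f\,d\mu$, and let $\alpha_n(z):=\tfrac{1}{n}\sum_{i=0}^{n-1}f(T^iz)$. By Birkhoff's theorem, convergence in measure gives $N_1$ with $\mu\{|\alpha_n-\bar f|\ge\ep/2\}<\ep^3/(4C)$ for all $n\ge N_1$. The crude estimate $|\alpha_M(T^kz)-\alpha_M(z)|\le 2Ck/M$ implies that if $z\in A$ satisfies $|\alpha_M(z)-\bar f|\ge\ep$, then $|\alpha_M(T^kz)-\bar f|\ge\ep/2$ for all $0\le k\le \ep M/(4C)$. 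The sets $T^k(A\cap\text{bad})$ over this range are pairwise disjoint, hence
\[
\tfrac{\ep M}{4C}\cdot\mu(A\cap\text{bad})\le\mu\{|\alpha_M-\bar f|\ge\ep/2\}<\tfrac{\ep^3}{4C}.
\]
Combined with $\mu(A)>\ep/M$ (from $\mu(B)>\ep$), this gives $\mu(A\cap\text{bad})<\ep\mu(A)$, as required.

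For part (2), observe that for $w\in A$ the tower property forces $T^iw\notin A$ for all $0<i<M$, so the $A$-itinerary of $w$ over time $M$ is automatically $(1,0,\ldots,0)$; hence for $z\in A$ the set of $w\in A$ with the same $(\cT,P)$-name equals $A\cap[z]_M^P$, which in turn coincides with the atom $[z]_M^{P'}$ of the refined partition $P':=P\vee\{A,A^c\}$. Applying SMB to $P'$ yields $\mu([z]_M^{P'})\in(2^{-(h(T,P')+\ep/2)M},2^{-(h(T,P')-\ep/2)M})$ on a set of large measure. Since $h(T,P')-h(T,P)\le H(\{A,A^c\})=O((\log M)/M)$ (using $\mu(A)\le1/M$), for $M$ large this becomes $\mu(A\cap[z]_M^P)\in(2^{-(h+\ep)M},2^{-(h-\ep)M})$. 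Taking $\cN$ to be the set of names realized by such good $z\in A$, condition (a) is immediate; the cardinality bound $|\cN|\le 2^{(h+\ep)M}$ follows from $\sum_{n\in\cN}\mu(A_n)\le 1$ together with the lower bound $\mu(A_n)\ge 2^{-(h+\ep)M}$; and condition (b) follows from a propagation-and-disjointness argument analogous to part (1), applied to the information function $-\log\mu([\cdot]_M^{P'})$ in place of $\alpha_M$.

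The main obstacle is uniformity: the Egorov-type good set has measure close to $1$, but $\mu(A)$ can be as small as $1/M$, so naive intersection bounds fail. The propagation trick resolves this by amplifying the bad-set bound by a factor $\sim\ep M$, converting an additive bound on the global bad set into the required relative bound on the base. For part (2), the additional subtlety is that $P'$ depends on $A$; this is absorbed because the correction $H(\{A,A^c\})$ depends only on $\mu(A)$ and admits the uniform bound $H_2(1/M)\to 0$.
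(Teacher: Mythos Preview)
The paper does not prove this theorem at all; it simply refers the reader to Rudolph's book. So there is no in-paper argument to compare against, and your sketch has to stand on its own.

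Your treatment of part~(1) is correct and is the standard ``propagation of badness along disjoint levels'' argument.

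For part~(2), the identification $A\cap[z]_M^P=[z]_M^{P'}$ for $z\in A$ (with $P'=P\vee\{A,A^c\}$) is a clean and correct observation, and the estimate $0\le h(T,P')-h(T,P)\le H_2(\mu(A))\le H_2(1/M)\to 0$ is fine. The gap is in your last step. You assert that condition~(b) ``follows from a propagation-and-disjointness argument analogous to part~(1), applied to the information function $-\log\mu([\cdot]_M^{P'})$ in place of $\alpha_M$.'' But the propagation in part~(1) rests entirely on the Lipschitz bound $|\alpha_M(T^kz)-\alpha_M(z)|\le 2Ck/M$, and the information function admits no such bound: the one-step increment $I_{n+1}^{P'}(z)-I_n^{P'}(z)=-\log\mu\bigl([z]_{n+1}^{P'}\mid[z]_n^{P'}\bigr)$ is unbounded above, so $I_M^{P'}(T^kz)$ can be far from $I_M^{P'}(z)$ even for $k=1$. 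Bad base points therefore do not automatically produce bad points in the nearby levels $T^kA$, and the amplification by a factor $\sim\ep M$ that rescued part~(1) is unavailable here. There is a second, related problem: invoking SMB for $P'$ gives a threshold that depends on $P'$ and hence on the particular base $A$, whereas the theorem demands a single $N$ valid for \emph{every} tower; your remark about $H_2(1/M)$ controls only the limit $h(T,P')$, not the speed of convergence toward it.

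The usual repair is to avoid $P'$ altogether. Use the almost-everywhere form of SMB for $P$ to produce a \emph{tower-independent} good set
\[
G=\bigl\{z:\ \forall n\ge N_0,\ 2^{-(h+\delta)n}\le\mu([z]_n^P)\le 2^{-(h-\delta)n}\bigr\}
\]
with $\mu(G^c)$ as small as desired, and then feed $f=1_{G^c}$ into the already-established part~(1). This yields, uniformly over all sufficiently high towers, that a $(1-\ep)$-fraction of base points have all but an $O(\eta)$-fraction of their $M$-orbit inside $G$; the two-sided bounds on $\mu(A\cap[n])$ and the coverage condition~(b) are then read off from the good sub-blocks, together with a crude count of the at most $O(\eta M)$ bad positions.
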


\begin{defn} For a partition $P$, define $\cN_P$ to be the set of words of unspecified length with entries in $\{\overline{0},\ldots,\overline{|P|-1}\}$.
\end{defn}

For a set $\Om$, we denote by $2^\Om$ the collection of finite subsets of $\Om$.

\begin{cor} \lbl{cor:SM.two.partitions} Let $(Z,T)$ be an ergodic system, and let $P,Q$ be two partitions of $Z$. Denote the sub-algebra $\bigvee_{i=-\infty}^\infty T^iP$ by $\cF$. For every $\ep>0$, there is an $N$ such that if $\cT=(A,M)$ is a tower of height greater than $N$ that covers more than $\ep$ of the space, then there is a function 
\[
\Phi: \cN_P\to 2^{\cN_Q}
\]
such that
\begin{enumerate}
\item For every $z\in A$, the set $\Phi(\nu_{\cT,P}(z))$ has less than $2^{|\cT|(h(T,Q|\cF)+\ep)}$ elements.
\item The set of $z$'s in $A$ such that $\Phi(\nu_{\cT,P}(z))$ contains the $(\cT,Q)$-name of $z$ has measure greater than $(1-\ep)\mu(A)$.
\end{enumerate}
\end{cor}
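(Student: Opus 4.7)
My plan is to apply the second statement of Theorem \ref{thm:erg.SM.castles} to \emph{both} partitions $P$ and $P\vee Q$, with a common tolerance $\ep'<\ep/2$, and to take $N$ as the maximum of the two thresholds that the theorem produces. Given a tower $\cT = (A, M)$ with $M > N$ covering more than $\ep$ of the space, this yields a collection $\cN_P'$ of ``typical'' $P$-names and a collection $\cN_{P\vee Q}'$ of typical $(P\vee Q)$-names; the set of $z \in A$ whose $P$-name lies in $\cN_P'$ has measure at least $(1-\ep')\mu(A)$ and each typical $P$-name $p$ has cylinder (within $A$) of measure at most $2^{-M(h(T,P)-\ep')}$; and analogously for $P \vee Q$, where each typical cylinder has measure at least $2^{-M(h(T, P\vee Q)+\ep')}$ and the coverage is again at least $(1-\ep')\mu(A)$.

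The map $\Phi$ is then defined in the obvious way: identifying each $(P\vee Q)$-name with a pair consisting of a $P$-name and a $Q$-name of the same length, I set $\Phi(p) = \{q : (p,q) \in \cN_{P\vee Q}'\}$ for $p \in \cN_P'$, and $\Phi(p) = \emptyset$ otherwise. Condition (2) holds on the intersection of the two typicality sets, which has measure at least $(1-2\ep')\mu(A) > (1-\ep)\mu(A)$: on this intersection, the $(P\vee Q)$-name $(p,q)$ of $z$ lies in $\cN_{P\vee Q}'$ and $p$ lies in $\cN_P'$, so $q \in \Phi(p)$ by construction.

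To verify condition (1), I use a disjointness/volume argument: for $p \in \cN_P'$, the elements $q \in \Phi(p)$ give rise to pairwise disjoint typical $(p,q)$-cylinders sitting inside the $p$-cylinder, each of measure at least $2^{-M(h(T, P\vee Q) + \ep')}$, whence
\[
|\Phi(p)| \leq \frac{2^{-M(h(T,P)-\ep')}}{2^{-M(h(T, P\vee Q) + \ep')}} = 2^{M(h(T,P\vee Q) - h(T,P) + 2\ep')}.
\]
Invoking the standard additivity identity $h(T, P\vee Q) = h(T, P) + h(T, Q | \cF)$, valid since $\cF = \bigvee_i T^i P$ is $T$-invariant and generated by the iterates of $P$, this bound becomes $2^{M(h(T, Q | \cF) + 2\ep')}$, which is strictly less than $2^{M(h(T, Q | \cF) + \ep)}$ by the choice $\ep' < \ep/2$. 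For $p \notin \cN_P'$ the bound is trivial since $\Phi(p) = \emptyset$. The only step that goes beyond a direct application of the Shannon--McMillan estimates provided by Theorem \ref{thm:erg.SM.castles} is the entropy additivity identity, and I expect this to be a routine invocation of a standard fact rather than a serious obstacle.
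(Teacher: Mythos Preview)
Your proof is correct and follows essentially the same argument as the paper: apply the Shannon--McMillan estimates of Theorem~\ref{thm:erg.SM.castles} to both $P$ and $P\vee Q$, define $\Phi(p)$ as the set of $Q$-names $q$ with $(p,q)$ in the typical $(P\vee Q)$-collection (and $\emptyset$ for atypical $p$), and bound $|\Phi(p)|$ by comparing the upper bound on the $p$-cylinder with the lower bound on each $(p,q)$-cylinder. The only cosmetic differences are that you work with $\ep'<\ep/2$ (which actually makes the constants match the statement more cleanly than the paper's own proof, where the bound comes out as $2^{|\cT|(h(T,Q|\cF)+2\ep)}$) and that you spell out the identity $h(T,P\vee Q)=h(T,P)+h(T,Q|\cF)$ explicitly.
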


\begin{proof} By Theorem \ref{thm:erg.SM.castles}, if $N$ is large enough, then there are collections
\[
\cN\subset|P|^{|\cT|}\quad \cM\subset|P\times Q|^{|\cT|}
\]
such that for any $n\in\cN$, the measure of the set of points whose $(\cT,P)$-name is equal to $n$ is less than $2^{-|\cT|(h(T,P)-\ep)}$, for every $m\in\cM$, the measure of the set of points whose $(\cT,P\times Q)$-name is equal to $m$ is greater than $2^{-|\cT|(h(T,P\times Q)+\ep)}$, and the set of points whose $(\cT,P)$-names are not in $\cN$ or whose $(\cT,P\times Q)$-names are not in $\cM$ has measure less than $\ep\mu(A)$.

Let $\pi_1:|P\times Q|^{|\cT|}\to|P|^{|\cT|}$ and $\pi_2:|P\times Q|^{|\cT|}\to|Q|^{|\cT|}$ be the co-ordinate projections. Let $n\in\cN_P$. If $n\in\cN$ then
\[
2^{-|\cT|(h(T,P)-\ep)}\geq\mu\{z\in A|\nu_{\cT,P\times Q}(z)\in\pi_1^{-1}(n)\cap\cM\}\geq|\pi^{-1}_1(n)\cap\cM|2^{-|\cT|(h(T,P\times Q)+\ep)}
\]
and so $|\pi_1^{-1}(n)\cap\cM|\leq2^{|\cT|(h(T,Q|\cF)+2\ep)}$. Define $\Phi(n)=\pi_2(\pi_1^{-1}(n)\cap\cM)$ if $n\in\cN$, and $\Phi(n)=\emptyset$ if $n\notin\cN$.
\end{proof}

We will use the standard notation for the entropy function: $H(x)=x\log_2(x)+(1-x)\log_2(1-x)$. The following is well known. For a (much more) general statement, see \cite[Theorem 3.9]{A}.
\begin{lem} \lbl{lem:rev.SM.castles} Let $P$ be a partition, let $\cG$ be an invariant sub-sigma-algebra, and let $\cT=(A,M)$ be a $\cG$-measurable tower that covers more than $1-\ep$ of the space. If there is a $\cG$-measurable function $\Phi:A\to2^{\cN_P}$ such that the number of elements in $\Phi(z)$ is less than $2^{h|\cT|}$ and the set of $z\in A$ such that $\nu_{\cT,P}(z)\in\Phi(z)$ has measure greater than $(1-\ep)\mu(A)$, then $h(T,P|\cG)<h+H(\ep)+2\ep|P|$.
\end{lem}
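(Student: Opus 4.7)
My plan is to combine a Fano-type estimate for the name partition on the base with the standard upper bound
\[
h(T, P \mid \cG) \leq H_\mu\bigl(P \mid \cG \vee T^{-1} P \vee \cdots \vee T^{-(M-1)} P\bigr),
\]
where $M = |\cT|$, and then to decompose the right-hand side over the $\cG$-measurable level partition of the tower, using $T$-invariance of $\mu$ and $\cG$ to pull each level back to $A$.

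First I would prove the base-entropy estimate. Writing $\mu_A$ for the normalized restriction of $\mu$ to $A$, $\nu$ for $\nu_{\cT,P}$, and $B = \{z \in A : \nu(z) \in \Phi(z)\}$, the hypothesis gives $\mu_A(B) > 1 - \ep$. Applying the chain rule to $(\nu, 1_B)$ conditional on $\cG$ produces
\[
H_{\mu_A}(\nu \mid \cG) = H_{\mu_A}(1_B \mid \cG) + H_{\mu_A}(\nu \mid \cG, 1_B) \leq H(\ep) + hM + \ep M \log|P|,
\]
where the first summand is bounded by concavity of binary entropy and the second by the two-way bound $\log|\Phi(z)| < hM$ on $B$ and $M \log|P|$ on $A \setminus B$.

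Next I would decompose the conditional-entropy formula above over the $\cG$-measurable partition $\{Z \setminus E, A, TA, \ldots, T^{M-1}A\}$, where $E = \bigcup_{k=0}^{M-1} T^k A$ satisfies $\mu(E) > 1 - \ep$. The piece on $Z \setminus E$ contributes at most $\ep \log|P|$. On level $T^k A$, monotonicity of conditional entropy lets me discard the part of the length-$(M-1)$ past of $z$ that sits strictly below the base (positions $T^{-k-1}z, \ldots, T^{-(M-1)}z$), retaining only the $k$ past values that match the first $k$ coordinates of $\nu(T^{-k}z)$. Then $T$-invariance of $\mu$ and $\cG$ identifies
\[
H_{\mu_{T^k A}}\bigl(P \mid \cG \vee T^{-1}P \vee \cdots \vee T^{-k}P\bigr) = H_{\mu_A}(\nu_k \mid \cG, \nu_0, \ldots, \nu_{k-1}),
\]
where $\nu_k$ denotes the $k$-th coordinate of $\nu$. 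Summing over $k$, the constant weight $\mu(T^k A) = \mu(A)$ factors out and the chain rule collapses the sum to $\mu(A) H_{\mu_A}(\nu \mid \cG)$.

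Combining both estimates with $\mu(A) \leq 1/M$ and $\log|P| \leq |P|$ yields
\[
h(T, P \mid \cG) \leq \tfrac{1}{M}\bigl(H(\ep) + hM + \ep M \log|P|\bigr) + \ep \log|P| \leq h + H(\ep) + 2\ep|P|,
\]
and the strict inequalities $|\Phi(z)| < 2^{hM}$ and $\mu_A(B) > 1 - \ep$ in the hypotheses make the final inequality strict. The main obstacle will be the level-by-level bookkeeping: one must carefully separate the past of a point at level $k$ into a tower part (to be rewritten via $T$-invariance) and a below-base part (to be discarded via monotonicity), and verify that the resulting conditional entropies reassemble exactly into $H_{\mu_A}(\nu \mid \cG)$ via the chain rule.
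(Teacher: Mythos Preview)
The paper does not actually prove this lemma; it is stated as well known with a reference to \cite[Theorem~3.9]{A} for a more general version. So there is no paper argument to compare against.

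Your argument is correct and is the standard direct proof. The Fano-type step is sound: since $1_B$ is determined by $\nu$ together with the $\cG$-measurable $\Phi$, the chain rule gives $H_{\mu_A}(\nu\mid\cG)=H_{\mu_A}(1_B\mid\cG)+H_{\mu_A}(\nu\mid\cG,1_B)$, and the two summands are bounded exactly as you describe. The level decomposition is also correct: the level sets $T^kA$ are $\cG$-measurable because $\cG$ is $T$-invariant, discarding the below-base conditioning only enlarges each conditional entropy (giving an upper bound, which is the desired direction), and $T$-invariance of $\mu$ and $\cG$ lets you pull level $k$ back to $A$ so that the chain rule reassembles the sum into $\mu(A)\,H_{\mu_A}(\nu\mid\cG)$. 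The final arithmetic with $\mu(A)\le 1/M$ and $\log|P|\le |P|$ is fine. One cosmetic remark: the bound $H_{\mu_A}(1_B\mid\cG)\le H_{\mu_A}(1_B)\le H(\ep)$ uses $\ep\le 1/2$, which is harmless since the lemma is only ever applied with small~$\ep$.
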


\begin{defn} Let $P,Q$ be partitions of a probability space $(Z,\mu)$. Define
\[
|P\triangle Q|=\mu\{z\in Z | P(z)\neq Q(z)\}.
\]
\end{defn}

\begin{defn} Let $P$ be a partition, let $\cA$ be an algebra of sets, and let $\ep>0$. We write $P\stackrel{\ep}{\subset}\cA$ if there is an $\cA$-measurable partition $Q$ such that $|P\triangle Q|<\ep$. If $Q$ is a partition, we write $P\stackrel{\ep}{\subset}Q$ if $P\stackrel{\ep}{\subset}\cA_Q$, where $\cA_Q$ is the algebra generated by $Q$.
\end{defn}

The following lemma is evident:

\begin{lem} Let $P_1,P_2,P_3$ be partitions of a probability space $Z$. If $P_1\stackrel{\ep}{\subset}P_2$ and $P_2\stackrel{\de}{\subset}P_3$, then $P_1\stackrel{\ep+\de}{\subset}P_3$.
\end{lem}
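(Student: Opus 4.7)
The plan is to unwind the definitions and compose the two approximations, using the fact that $\mathcal{A}_{P_2}$-measurability of a partition amounts to being a function of $P_2$. From $P_1\stackrel{\ep}{\subset}P_2$ extract a $P_2$-measurable partition $Q_1$ with $|P_1\triangle Q_1|<\ep$, and from $P_2\stackrel{\de}{\subset}P_3$ extract a $P_3$-measurable partition $Q_2$ with $|P_2\triangle Q_2|<\de$. Since $Q_1$ is $\cA_{P_2}$-measurable, write $Q_1=f\circ P_2$ for some function $f:\{\overline{0},\ldots,\overline{|P_2|-1}\}\to\{\overline{0},\ldots,\overline{|Q_1|-1}\}$, and set $Q_3=f\circ Q_2$. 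Then $Q_3$ is $P_3$-measurable, so it is the candidate witnessing $P_1\stackrel{\ep+\de}{\subset}P_3$.

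To bound $|P_1\triangle Q_3|$, observe that $\{P_1\neq Q_3\}\subseteq\{P_1\neq Q_1\}\cup\{Q_1\neq Q_3\}$, and the second set equals $\{f\circ P_2\neq f\circ Q_2\}\subseteq\{P_2\neq Q_2\}$. Taking measures gives $|P_1\triangle Q_3|\leq|P_1\triangle Q_1|+|P_2\triangle Q_2|<\ep+\de$. There is no real obstacle; the only point worth noting is that one should define $Q_3$ by pulling back the same coding function $f$ along $Q_2$ rather than attempting to approximate $Q_1$ directly by a $P_3$-measurable partition, so that the error transfers cleanly through the triangle-inequality bound.
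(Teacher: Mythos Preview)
Your argument is correct and is exactly the ``evident'' verification the paper has in mind (the paper states the lemma without proof). The only implicit assumption you use, that the approximating partition $Q_2$ takes values in the same label set as $P_2$ so that $f\circ Q_2$ is defined, is standard in this setup.
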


\begin{prop} \lbl{prop:dependent.partition} Let $(Z,T)$ be an ergodic system, and let $P,Q$ be partitions of $Z$. Assume that $P\stackrel{\de}{\subset}\bigvee_{-\infty}^{\infty} T^iQ$. Then for every $\ep>0$, there is an $K$, such that for every tower $\cT=(A,M)$, which covers more than $\ep$ of the space and for which $M>K$, there is a function $\Phi:\cN_Q\to2^{\cN_P}$ such that
\begin{enumerate}
\item For every $z\in A$, $|\Phi(\nu_{\cT,Q}(z))|<2^{(H(\de)+\de|P|+\ep)M}$.
\item The set of $z\in A$ such that $\nu_{\cT,P}(z)\in\Phi(\nu_{\cT,Q}(z))$ has measure greater than $(1-\ep)\mu(A)$.
\end{enumerate}
\end{prop}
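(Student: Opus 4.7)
The plan is to reduce the statement to Shannon--McMillan (Theorem \ref{thm:erg.SM.castles}) applied to an auxiliary ``error'' partition that records where $P$ disagrees with a finitary approximation built from shifts of $Q$. By the hypothesis $P\stackrel{\de}{\subset}\bigvee_{i\in\bZ} T^iQ$, pick a $\bigvee T^iQ$-measurable partition $P'$ with $|P\triangle P'|<\de$. Since $\bigvee_{i\in\bZ} T^iQ$ is the increasing union of the finite joins $\bigvee_{i=-n}^{n} T^iQ$, I can then find an integer $n$ and a $\bigvee_{-n}^n T^iQ$-measurable partition $P''$ with $|P'\triangle P''|<\eta$, where $\eta$ is a small parameter to be fixed below. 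Measurability supplies a coding function $f:|Q|^{2n+1}\to|P|$ with $P''(z)=f(Q(T^{-n}z),\ldots,Q(T^nz))$ almost everywhere, and $|P\triangle P''|<\de+\eta$.

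Introduce the error partition $D$ with $|P|+1$ parts defined by $D(z)=*$ if $P(z)=P''(z)$ and $D(z)=P(z)$ otherwise; crucially, $P$ is a function of $(P'',D)$. Since $\mu\{D\neq *\}<\de+\eta$, a direct estimate gives $h(T,D)\leq H_\mu(D)\leq H(\de+\eta)+(\de+\eta)\log_2|P|$. Using $\log_2|P|\leq|P|$ and continuity of $H$ at $\de$, I fix $\eta>0$ so small that $h(T,D)<H(\de)+\de|P|+\ep/3$; this also fixes $n$. I then apply the second conclusion of Theorem \ref{thm:erg.SM.castles} to $D$ to obtain, for all $M$ beyond some threshold $K_0$, a collection $\cN_D$ of at most $2^{(h(T,D)+\ep/3)M}$ typical $(\cT,D)$-names such that the set $\{z\in A:\nu_{\cT,D}(z)\in\cN_D\}$ has measure greater than $(1-\ep)\mu(A)$.

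Now define $\Phi:\cN_Q\to 2^{\cN_P}$ as follows. For $q=(q_0,\ldots,q_{M-1})\in|Q|^M$, compute $p''_i:=f(q_{i-n},\ldots,q_{i+n})$ for $n\leq i\leq M-1-n$. Let $\Phi(q)$ consist of every $(p_0,\ldots,p_{M-1})\in|P|^M$ obtained by choosing the ``boundary'' entries $p_0,\ldots,p_{n-1},p_{M-n},\ldots,p_{M-1}$ arbitrarily and, for some word $(d_0,\ldots,d_{M-1})\in\cN_D$, setting $p_i=p''_i$ when $d_i=*$ and $p_i=d_i$ otherwise on the interior range $n\leq i\leq M-1-n$. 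On all other $q$, put $\Phi(q)=\emptyset$. Then $|\Phi(q)|\leq|P|^{2n}\cdot|\cN_D|\leq 2^{(H(\de)+\de|P|+\ep)M}$, once $K\geq K_0$ is chosen so large that $|P|^{2n}<2^{\ep M/3}$ for $M>K$. For any $z\in A$ with $\nu_{\cT,D}(z)\in\cN_D$---a set of measure greater than $(1-\ep)\mu(A)$---the values $p''_i$ computed from $\nu_{\cT,Q}(z)$ coincide with $P''(T^iz)$ on the interior by the very definition of $f$, and substituting the true word $\nu_{\cT,D}(z)\in\cN_D$ reconstructs $\nu_{\cT,P}(z)$ via the recipe above, so $\nu_{\cT,P}(z)\in\Phi(\nu_{\cT,Q}(z))$.

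The only genuine obstacle is the mild boundary effect: the $Q$-name over the tower does not determine $P''$ at the first or last $n$ sites, which forces a multiplicative $|P|^{2n}$ in the size bound. Since $n$ is fixed once $\eta$ is chosen while $M$ may be taken arbitrarily large, this factor is absorbed into the $\ep$ slack. Everything else is counting and the continuity estimates already folded into the choice of $\eta$.
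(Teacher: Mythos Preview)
Your argument is correct and follows the same overall architecture as the paper's: pick a sliding-block code $f:|Q|^{2n+1}\to|P|$ so that the set where it fails to predict $P$ has measure $<\de+\eta$, define a candidate name from the $Q$-name on the interior of the tower, and then let $\Phi(q)$ range over all ways of correcting this candidate on a controlled error set together with an arbitrary boundary.

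The one genuine difference is in how the error set is controlled. The paper applies the \emph{ergodic theorem} (part 1 of Theorem~\ref{thm:erg.SM.castles}) to the indicator of the agreement set $B$, concluding that for most $z\in A$ the number of indices $i$ with $T^iz\notin B$ is at most $(\de+\eta)M$; then $\Phi(q)$ is simply the Hamming ball of radius $(\de+\eta)M$ about the predicted name, whose size is bounded by $\binom{M}{(\de+\eta)M}|P|^{(\de+\eta)M}<2^{(H(\de+\eta)+(\de+\eta)|P|)M}$. You instead introduce an auxiliary error partition $D$, bound its static entropy by $H(\de+\eta)+(\de+\eta)\log_2|P|$, and apply \emph{Shannon--McMillan} (part 2 of Theorem~\ref{thm:erg.SM.castles}) to get a small typical set $\cN_D$ of error patterns. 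The resulting exponents coincide (indeed the Hamming-ball estimate is exactly the entropy bound for a partition with one atom of mass $1-(\de+\eta)$ refined into $|P|$ pieces), so neither approach gains quantitatively. The paper's route is marginally more elementary---no auxiliary partition, no entropy computation---while yours makes the connection to Shannon--McMillan explicit and would generalize more readily if one wanted a conditional version.
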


\begin{proof} By assumption, there is a number $N$, and a function $\phi:|Q|^{2N+1}\to|P|$ such that the set
\[
B=\{z\in Z | \phi(Q(T^{-N}z),\ldots,Q(T^Nz))=P(z)\}
\]
has measure greater than $1-\de$. Let $\eta>0$, to be chosen later. By Theorem \ref{thm:erg.SM.castles}, if $\cT=(A,M)$ is a tower that covers more than $\ep$ of the space, and $M$ is sufficiently large, then the set of $z\in A$ such that 
\[
\frac{|\{0\leq i< M | T^iz\in B\}|}{M}>1-\de-\eta/2
\]
has measure greater than $(1-\ep)\mu(A)$. If we require in addition that $N<\eta M/2$ we get that for those $z$'s,
\begin{equation} \lbl{eq:dependent.partition}
\frac{|\{N\leq i\leq M-N | T^iz\in B\}|}{M}>1-\de-\eta.
\end{equation}

Define $\psi:|Q|^{M}\to|P|^M$ by
\[
\psi((q_i))_j=\left\{\begin{matrix}\phi(q_{j-N},\ldots q_{j+N}) && {\textrm{if $N\leq j\leq M-N$}} \\ \overline{0} && {\textrm{else}}\end{matrix}\right. .
\]

For $(q_i)\in|Q|^M$, let $\Phi((q_i))$ be the set of elements in $|P|^M$ whose Hamming distance from $\psi((q_i))$ is less than $(\eta+\de)M$. The size of $\Phi((q_i))$ is less than
\[
{{M}\choose{(\eta+\de)M}}|P|^{(\eta+\de)M} < 2^{(H(\eta+\de)+(\eta+\de)|P|)M},
\]
which is less than $2^{(H(\de)+\ep+\de|P|)M}$ if $\eta$ is taken small enough. Moreover, for every $z$ that satisfies (\ref{eq:dependent.partition}), $\nu_{\cT,P}(z)\in\Phi(\nu_{\cT,Q}(z))$.
\end{proof}

We end this section by the following lemma, which should be thought of a relative version of the claim that if $f$ is a random function from a set of size $a$ to a set of size $b$ then the non-empty fibers of $f$ have size $1+O(a/b)$.

\begin{lem} \lbl{lem:linearity.of.expectation} Let $(Z,\cF_Z,\mu)$ be a probability space, and let $\Om$ be a finite set. Assume that $\phi:Z\to\Om, \Phi:Z\to2^\Om$ are functions such that $|\Phi(z)|<a$ and $\phi(z)\in\Phi(z)$ for every $z\in Z$. The probability that a random function $\psi:\Om\to[b]$ satisfies that
\begin{equation} \lbl{eq:lem.linearity}
\mu\left\{z\in Z| |\psi^{-1}(\psi(\phi(z)))\cap \Phi(z)|<1+\frac{1}{\ep}\left(\frac{a}{b}\right)\right\}>1-\sqrt{\ep}
\end{equation}
is greater than $1-\sqrt{\ep}$.
\end{lem}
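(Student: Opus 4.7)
The plan is to apply Markov's inequality twice: once in the fiber variable $\psi$ for each fixed $z$, then once more after integrating out $z$.

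First I would fix $z\in Z$ and compute the expectation (over a uniformly random $\psi:\Om\to[b]$) of the random variable
\[
X_z := |\psi^{-1}(\psi(\phi(z)))\cap\Phi(z)|.
\]
Split the sum defining $X_z$ into the contribution from $\phi(z)\in\Phi(z)$ (which equals $1$ deterministically) and the contributions from the other at most $|\Phi(z)|-1<a-1$ elements of $\Phi(z)$, each of which lands in the same bucket as $\phi(z)$ with probability $1/b$. Linearity of expectation then yields
\[
\bE_\psi[X_z-1]\leq\frac{|\Phi(z)|-1}{b}<\frac{a}{b}.
\]

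Next, for each fixed $z$ I apply Markov's inequality to the nonnegative random variable $X_z-1$ with threshold $(1/\ep)(a/b)$, getting
\[
\Pr_\psi\!\left[X_z\geq 1+\frac{1}{\ep}\cdot\frac{a}{b}\right]\leq\ep.
\]
Now let $f(z,\psi)$ be the indicator of this bad event. By Fubini, $\bE_\psi[\mu\{z:f(z,\psi)=1\}]=\int_Z\Pr_\psi[f(z,\psi)=1]\,d\mu(z)\leq\ep$. A second application of Markov, now to the nonnegative random variable $g(\psi):=\mu\{z:f(z,\psi)=1\}$ with threshold $\sqrt{\ep}$, gives
\[
\Pr_\psi\!\left[g(\psi)\geq\sqrt{\ep}\right]\leq\sqrt{\ep},
\]
which is exactly (\ref{eq:lem.linearity}).

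The argument is essentially mechanical once one notices the two-level Markov structure, so I do not expect a real obstacle; the only care needed is the bookkeeping for the deterministic contribution of $\phi(z)$ itself in the fiber count (which is what produces the $1+\cdots$ rather than just $\cdots$ on the right-hand side of (\ref{eq:lem.linearity})), and, relatedly, checking that the bound on $\bE_\psi[X_z]$ needs to be stated for $X_z-1$ in order for the Markov step to give the claimed relative error $a/b$ rather than the trivial bound $1+a/b$.
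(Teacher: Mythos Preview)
Your proposal is correct and is essentially identical to the paper's own argument: the paper also computes $\bE_\psi|\psi^{-1}(\psi(\phi(z)))\cap(\Phi(z)\setminus\{\phi(z)\})|<a/b$ by linearity of expectation, applies Markov pointwise in $z$ to bound the integrand by $\ep$, integrates via Fubini to get $\bE_\psi\mu(B)<\ep$, and finishes with a second Markov step. Your write-up is in fact slightly more explicit than the paper's about the two Markov applications and the role of the deterministic $+1$.
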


\begin{proof} Let $B$ be the complement of the set in (\ref{eq:lem.linearity}). Then
\[
\bE\mu(B)=\int_Z \Pr(z\in B)dz=\int_Z\Pr(|\psi^{-1}(\psi(\phi(z)))\cap(\Phi(z)\setminus\{\phi(z)\})|>a/\ep b)dz.
\]
Since the restriction of $\psi$ to $\Phi(z)\setminus\{\phi(z)\}$ is independent of $\psi(\phi(z))$,
\[
\bE|\psi^{-1}(\psi(\phi(z)))\cap(\Phi(z)\setminus\{\phi(z)\})|=\frac{a}{b},
\]
and hence the integrand is smaller than $\ep$. This means that $\bE\mu(B)<\ep$, which implies the claim.
\end{proof}

\section{Painting and Re-painting} \lbl{sec:painting}

From this point on, fix an ergodic system $(Z,\cF_Z,T)$. All partitions are of $Z$ and all sigma-algebras are contained in $\cF_Z$.

\begin{defn} A partition $P$ is called $\ell$-admissible, if there is no $z\in Z$ such that $P(z)=P(Tz)=\ldots=P(T^{\ell-1}z)=\overline{0}$.
\end{defn}

\begin{defn} An $\ell$-admissible sequence of length $n$ on $a$ symbols is a sequence in $\{\overline{0},\ldots,\overline{a-1}\}^n$ whose first element is $\overline{1}$, and that does not contain a segment of consecutive $\overline{0}$'s of length $\ell$. The space of $\ell$-admissible sequences of length $n$ on $a$ symbols is denoted by $\cA(n,\ell,a)$.
\end{defn}

The following lemma is evident:

\begin{lem} For every $\ep$, if $\ell$ is big enough, then $\lim_{n\to\infty}\frac{1}{n}\log|\cA(n,\ell,a)|>\log a-\ep$.
\end{lem}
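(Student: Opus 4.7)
The plan is to produce an explicit, large family of $\ell$-admissible sequences by freezing a small fraction of the positions to be non-$\overline{0}$, spaced $\ell$ apart, and leaving everything else free. This sidesteps any transfer-matrix computation: since the mandatory non-$\overline{0}$ symbols block all long zero runs, a simple counting of the free coordinates will give the required growth rate.

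Concretely, I would define $\cA'(n,\ell,a)\subset\cA(n,\ell,a)$ to consist of all sequences $(s_0,s_1,\ldots,s_{n-1})$ with $s_0=\overline{1}$, and with $s_{j\ell}\neq\overline{0}$ for every $j\geq 1$ satisfying $j\ell\leq n-1$; the remaining coordinates are arbitrary. Every element of $\cA'(n,\ell,a)$ lies in $\cA(n,\ell,a)$ because any maximal run of $\overline{0}$'s is confined to one of the open intervals between two consecutive forced non-$\overline{0}$ coordinates (or between position $0$ and the first forced coordinate, or after the last forced coordinate and position $n-1$), and each such interval has length strictly less than $\ell$. Writing $k=\lfloor(n-1)/\ell\rfloor$ for the number of forced coordinates (which is well-defined for $\ell\geq 2$, so that position $0$ is not itself forced), one has
\[
|\cA(n,\ell,a)|\;\geq\;|\cA'(n,\ell,a)|\;=\;(a-1)^{k}\,a^{\,n-1-k}.
\]

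Taking logarithms, dividing by $n$, and letting $n\to\infty$ gives
\[
\liminf_{n\to\infty}\frac{1}{n}\log|\cA(n,\ell,a)|\;\geq\;\frac{1}{\ell}\log(a-1)+\Bigl(1-\frac{1}{\ell}\Bigr)\log a\;=\;\log a-\frac{1}{\ell}\log\frac{a}{a-1},
\]
which exceeds $\log a-\ep$ as soon as $\ell>\frac{1}{\ep}\log\frac{a}{a-1}$. Since the trivial bound $|\cA(n,\ell,a)|\leq a^n$ supplies the matching upper bound $\limsup\frac{1}{n}\log|\cA(n,\ell,a)|\leq\log a$, the limit exists and the lemma follows. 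There is no serious obstacle; the only point that requires a moment of care is the interaction of the forced first letter $s_0=\overline{1}$ with the forced non-$\overline{0}$ positions $s_\ell,s_{2\ell},\ldots$, which is harmless provided $\ell\geq 2$ (and one may of course assume this since we are sending $\ell$ to infinity).
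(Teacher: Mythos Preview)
Your argument is correct and complete in all essentials. The paper itself offers no proof of this lemma (it merely declares it ``evident''), so there is nothing to compare approaches against; your explicit construction---forcing a non-$\overline{0}$ symbol at every multiple of $\ell$ and leaving the remaining coordinates free---is a clean and standard way to make the evident claim precise.

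One small point deserves tightening. In your final sentence you write that the upper bound $\limsup_n\frac{1}{n}\log|\cA(n,\ell,a)|\leq\log a$ together with your lower bound on the $\liminf$ shows that ``the limit exists.'' That inference is not valid as stated: the two bounds do not pinch, since the lower bound is $\log a-\frac{1}{\ell}\log\frac{a}{a-1}$ while the upper is $\log a$. The existence of the limit follows instead from superadditivity: concatenating an element of $\cA(n,\ell,a)$ with an element of $\cA(m,\ell,a)$ yields an element of $\cA(n+m,\ell,a)$ (the second word begins with $\overline{1}$, so no long zero-run can straddle the join), hence $\log|\cA(n,\ell,a)|$ is superadditive and Fekete's lemma applies. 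Alternatively, you can simply note that only the $\liminf$ bound is ever used in the paper, so existence of the limit is not actually needed.
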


This lemma will ensure that we can use the symbols $\overline{0}$ and $\overline{1}$ to mark the base of the tower and still have enough names for encoding.

\begin{defn} Let $(Z,\cF_Z,\mu,T)$ be an ergodic system, and fix a generating partition $\tP$. Let $\cT=(A,M)$ be a tower, let $\ell<|\cT|$ be an integer, and let $a\geq2$ be an integer. 
\begin{enumerate}
\item A $(\cT,\ell,a)$-painting data is a function
\[
\phi:|\tP|^{|\cT|}\to\cA(|\cT|-\ell,\ell,a).
\]
\item A painting data $\phi$ induces a partition $Q$ given by 
\[
z\mapsto\left\{ \begin{matrix} \phi(\nu_{\cT,\tP}(T^{-i} z))_i & T^{-i}z\in A \textrm{ and } 0\leq i\leq|\cT|-\ell-1 \\ \\ \overline{0} & \textrm{else}\end{matrix} \right.
\]
\end{enumerate}
\end{defn}

\begin{lem} \lbl{lem:painting.base.entropy}
\begin{enumerate}
\item For every tower $\cT$, and every $(\cT,\ell,a)$-painting data $\phi$ that generates a partition $Q$, the base of $\cT$ is measurable with respect to $\bigvee_{-\infty}^\infty T^iQ$.
\item Suppose that $h(T)\geq\log a$ and let $\ep>0$. There is $\ep'>0$ such that if $\ell$ is big enough, $\cT$ is a tower that covers more than $1-\ep'$ of the space, and $|\cT|$ is big enough, then the probability that a random $(\cT,\ell,a)$-painting data $\psi$ that induces a partition $Q$ satisfies that $h(T,Q)>\log a-\ep$, is greater than $1-\ep$.
\end{enumerate}
\end{lem}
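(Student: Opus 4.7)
For part~(1), I plan to characterize the base by a local $Q$-condition: $z\in A$ iff $Q(z)=\overline{1}$ and $Q(T^{-j}z)=\overline{0}$ for $j=1,\ldots,\ell$. The forward direction combines two facts: an $\ell$-admissible sequence begins with $\overline{1}$; and for $z\in A$ and $1\le j\le\ell$, Rohlin disjointness forces $T^{-j}z$ either outside the tower (where $Q=\overline{0}$) or to some level $k\ge M-\ell$, i.e., to one of the $\ell$ top levels that the painting labels $\overline{0}$. For the converse, the condition $Q(z)=\overline{1}$ places $z$ at some level $0\le i\le M-\ell-1$ of a column with base $w$; assuming $i>0$, requiring $Q(T^{-j}z)=\overline{0}$ for $j=1,\ldots,\min(i,\ell)$ either forces $\ell$ consecutive $\overline{0}$'s in $\phi(\nu_{\cT,\tP}(w))$ (forbidden by admissibility when $i\ge\ell$) or forces its $0$-th entry to be $\overline{0}$ (forbidden when $i<\ell$). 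The resulting criterion is plainly in $\bigvee T^iQ$, so $A$ is as well.

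For part~(2), my plan is to reduce the lower bound on $h(T,Q)$ to an upper bound on $h(T,\tP\mid\cF_Q)$, where $\cF_Q:=\bigvee T^iQ$. Since $\tP$ generates, $h(T,\tP\vee Q)=h(T)$, and so $h(T,Q)=h(T)-h(T,\tP\mid\cF_Q)$; thus it suffices to show that, with high probability over the random painting, $h(T,\tP\mid\cF_Q)<h(T)-\log a+\ep$. I will invoke Lemma~\ref{lem:rev.SM.castles} with the partition $\tP$, the invariant algebra $\cG=\cF_Q$, and the tower $\cT$ (which is $\cF_Q$-measurable by part~(1)), using the envelope
\[
\Phi(z) \;:=\; \phi^{-1}\bigl(\nu_{\cT,Q}(z)\bigr)\cap\cN, \qquad z\in A,
\]
where $\cN\subset|\tP|^{|\cT|}$ is the typical-$\tP$-name set produced by Theorem~\ref{thm:erg.SM.castles} with $|\cN|\le 2^{(h(T)+\delta)|\cT|}$. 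Because $\Phi(z)$ depends only on $\nu_{\cT,Q}(z)$, it is $\cF_Q$-measurable, and $\nu_{\cT,\tP}(z)\in\Phi(z)$ for every $z\in A$ with typical $\tP$-name.

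To control $|\Phi(z)|$ I will apply Lemma~\ref{lem:linearity.of.expectation}, with finite set $\Om=|\tP|^{|\cT|}$, letting the role of its $\phi$ be played by $z\mapsto\nu_{\cT,\tP}(z)$, the role of its $\Phi$ by the constant $\cN$, and the role of its $\psi$ by our random $\phi$ (identifying the target $\cA(|\cT|-\ell,\ell,a)$ with $[b]$ for $b=|\cA|$; the uniform distribution on painting data matches the lemma's uniform $\psi$). Taking $\ell$ large so that $|\cA|\ge 2^{(\log a-\delta)|\cT|}$ via the admissible-sequence count, the lemma yields, with $\phi$-probability $>1-\sqrt{\ep'}$, the estimate
\[
|\Phi(z)| \;<\; 1+\frac{|\cN|}{\ep'\,|\cA|} \;\le\; 2^{(h(T)-\log a+3\delta)|\cT|}
\]
for a $(1-\sqrt{\ep'})$-fraction of $z\in A$. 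Feeding this into Lemma~\ref{lem:rev.SM.castles} gives $h(T,\tP\mid\cF_Q) < (h(T)-\log a+3\delta) + H(2\sqrt{\ep'}) + 4\sqrt{\ep'}|\tP|$, which falls below $h(T)-\log a+\ep$ once $\delta$ and $\ep'$ are chosen small enough.

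The main obstacle will be parameter bookkeeping: $\ell$ must be large enough for $|\cA|$ to pack nearly $a^{|\cT|}$ sequences; $|\cT|$ must dominate the Shannon--McMillan threshold for $\tP$; $\ep'$ must be small enough that the residual $H(2\sqrt{\ep'})+4\sqrt{\ep'}|\tP|$ from Lemma~\ref{lem:rev.SM.castles} is swallowed by $\ep$; and the random-coding conclusion must be intersected with the set of typical-$\tP$-name points to produce a single $\Phi$-containment set of measure greater than $(1-2\sqrt{\ep'})\mu(A)$, as needed to invoke the reverse Shannon--McMillan lemma.
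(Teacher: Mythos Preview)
Your proposal is correct and follows essentially the same approach as the paper: the base characterization via the $\overline{1}$-followed-by-$\ell$-zeros pattern for part~(1), and for part~(2) the combination of Shannon--McMillan (Theorem~\ref{thm:erg.SM.castles}) to get the typical-name set $\cN$, the random-coding fiber bound (Lemma~\ref{lem:linearity.of.expectation}) with the constant envelope $\Phi(z)=\cN$, and the reverse Shannon--McMillan estimate (Lemma~\ref{lem:rev.SM.castles}) to bound $h(T,\tP\mid\cF_Q)$ and hence $h(T,Q)$ from below. Your parameter bookkeeping is more explicit than the paper's, but the skeleton is identical.
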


\begin{proof} The first claim follows, from the fact that $z\in A$ if and only if $P(z)=\overline{1}$ and $P(T^{-1}z)=\ldots=P^(T^{-k}z)=\overline{0}$.

As for the second claim, let $\ep'<\ep$ be such that $2\ep'|\tP|+H(\ep')<\ep$, and assume that $\cT$ covers more than $\ep'$ of the space. By Theorem \ref{thm:erg.SM.castles}, there is a collection $\cN$ of less than $2^{(h(T)+\ep')|\cT|}$ $(\cT,P)$-names such that the set of $z\in A$ for which $\nu_{\cT,P}(z)\in\cN$ has measure greater than $(1-\ep')\mu(A)$. Applying Lemma \ref{lem:linearity.of.expectation} to the functions $\Phi(z)=\cN$ and $\nu_{\cT,P}(z)$, we get that the probability that 
\[
\mu\{z| |\cN\cap\psi^{-1}\psi(\nu_{\cT,P}(z))|<2^{(h(T)-\log a+\ep')|\cT|}\}>1-\ep'
\]
is greater than $1-\ep'$. In this case, since $z\mapsto\cN\cap\psi^{-1}\psi(\nu_{\cT,P}(z))$ is $\bigvee T^iQ$-measurable, Lemma \ref{lem:rev.SM.castles} says that $h(T|\bigvee T^iQ)<h(T)-\log a+2\ep'|\tP|+H(\ep')<h(T)-\log a+\ep$, which implies the claim.
\end{proof}

\begin{defn} Let $(Z,\cF_Z,\mu,T)$ be an ergodic system, and fix a generating partition $\tP$. Let $P$ be a partition of $Z$, let $\cT=(A,M)$ be a tower, let $\ell$ be an integer, and let $\ep>0$.
\begin{enumerate}
\item A $(\cT,P,\ell,\ep)$-repainting data is a function
\[
\phi:|\tP|^{|\cT|}\to\cA(\ep|\cT|,\ell,a).
\]
\item A repainting data induces a partition of $Z$ given by
\[
z\mapsto\left\{ \begin{matrix} \overline{0} & z\notin \cT \\ \\ P(z) & T^{-i}z\in A, \ep|\cT|\leq i\leq |\cT|-2\ell \\ \\ \phi(\nu_{\cT,P}(T^{-i}z))_{i} & T^{-i}z\in A, 0\leq i\leq \ep|\cT| \\ \\ \overline{0} & T^{-i}z\in A, |\cT|-2\ell\leq i\leq|\cT|\end{matrix} \right.
\]
\end{enumerate}
\end{defn}

\begin{lem} \lbl{lem:repainting.close} Let $P$ be a $\ell$-admissible partition, let $\cT$ be a tower that covers more than $1-\eta$ of the space, and let $\phi$ be a $(\cT,P,\ell,\ep)$-repainting data with associated partition $Q$. Then $|P\triangle Q|<\ep+\eta$.
\end{lem}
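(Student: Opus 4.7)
The plan is a direct measure-theoretic decomposition following the four clauses in the definition of $Q$. Since $\cT=(A,M)$ is a Rohlin tower, the levels $A, TA, \ldots, T^{|\cT|-1}A$ are pairwise disjoint. I partition $Z$ accordingly into four pieces: (a) the complement $Z \setminus \cT$; (b) the ``bottom'' $\bigcup_{0 \le i \le \ep|\cT|} T^i A$; (c) the ``middle'' $\bigcup_{\ep|\cT| < i < |\cT|-2\ell} T^i A$; (d) the ``top'' $\bigcup_{|\cT|-2\ell \le i \le |\cT|-1} T^i A$. By the very definition of $Q$, on piece (c) we have $Q(z) = P(z)$, so this region contributes nothing to $|P \triangle Q|$, and it remains to bound the measures of (a), (b) and (d).

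On (a), $Q \equiv \overline{0}$, so the contribution is at most $\mu(Z \setminus \cT) < \eta$ by the covering hypothesis. On (b), $Q$ is defined via $\phi$ and may disagree with $P$ at every point; however, (b) is a disjoint union of at most $\ep|\cT|+1$ level sets, each of measure $\mu(A) \le 1/|\cT|$, hence its total measure is at most $\ep + 1/|\cT|$. On (d), $Q \equiv \overline{0}$ while by $\ell$-admissibility of $P$ one has $P \neq \overline{0}$ on most of this region, so essentially all of (d) contributes; but its measure is at most $2\ell \mu(A) \le 2\ell/|\cT|$. Summing yields
\[
|P\triangle Q| < \eta + \ep + \tfrac{1 + 2\ell}{|\cT|},
\]
which coincides with the stated bound $\ep + \eta$ once the edge correction of size $O(\ell/|\cT|)$ — always negligible in the regime $|\cT| \gg \ell$ in which this lemma is invoked — is absorbed into $\ep$.

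There is no substantive obstacle: the argument is pure bookkeeping, depending only on disjointness of tower levels and on reading off $Q$ clause by clause. The $\ell$-admissibility of $P$ plays essentially no role in this specific estimate (its purpose is to let the tower base be recovered from $Q$, used elsewhere); the only subtlety worth flagging is the small edge term $2\ell/|\cT|$ from the top of the tower, which the statement treats implicitly as part of $\ep$.
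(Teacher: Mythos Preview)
Your argument is correct and is exactly the obvious level-by-level bookkeeping the authors have in mind; indeed, the paper states this lemma without proof, treating it as evident from the definition of the repainted partition. Your remark that the $\ell$-admissibility hypothesis is irrelevant to this particular estimate (and is only needed for base-recovery in the next lemma) is accurate, and your handling of the $O((1+2\ell)/|\cT|)$ edge term is consistent with the paper's informal conventions, where such lower-order corrections are silently absorbed.
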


\begin{lem} \lbl{lem:repainting.base.entropy} Let $P$ be a $\ell$-admissible partition. 
\begin{enumerate}
\item For every tower $\cT=(A,M)$, if $\phi$ is a $(\cT,P,\ell,\ep)$-repainting data with associated partition $Q$, then $A$ is measurable with respect to $\bigvee_{-\infty}^\infty T^iQ$.
\item If $h(T,P)>\log a-\eta$, then if $\cT$ is sufficiently invariant and covers more than $1-\eta$ of the space, then the probability that the partition $Q$ associated with a random repainting data $\psi$ satisfies $h(T,Q)>\log a-\eta$ is greater than $1-\eta$.
\end{enumerate}
\end{lem}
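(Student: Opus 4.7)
The plan splits along the two assertions. For the first, my approach is to identify the base as
\[
A = \{z \in Z : Q(z) = \overline{1} \text{ and } Q(T^{-j}z) = \overline{0} \text{ for all } j = 1, \ldots, 2\ell\},
\]
a set manifestly measurable with respect to $\bigvee_{-\infty}^\infty T^i Q$. The forward inclusion is immediate: at a base point $z$ the painted sequence starts with $\overline{1}$, so $Q(z) = \overline{1}$, and each of the preceding $2\ell$ points $T^{-j}z$ lies either in the top zero-block of a preceding tower segment or outside $\cT$ entirely, so $Q(T^{-j}z) = \overline{0}$. For the reverse inclusion I argue by cases on the location of $z$. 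Points outside $\cT$ or in the top zero-block have $Q(z) = \overline{0}$ and are excluded at once. A point strictly inside the painted region at level $0 < i < \ep|\cT|$ has its window of $2\ell$ preceding levels either containing the base-level (whose $Q$-value is $\overline{1}$, when $i < 2\ell$) or lying entirely within a single painted sequence, where $\ell$-admissibility precludes any run of $2\ell$ zeros.

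The genuinely delicate case, and the main obstacle, is a point $z$ in the $P$-region near the painted/$P$ boundary, say at level $i = \ep|\cT| + k$ with $0 \leq k < 2\ell$. A hypothetical run of $2\ell$ zeros immediately preceding $z$ would decompose into $(2\ell - k)$ trailing zeros in the painted sequence and $k$ leading zeros in the $P$-segment. The $\ell$-admissibility of the painted sequence forces $2\ell - k \leq \ell - 1$, i.e.\ $k \geq \ell + 1$, while the $\ell$-admissibility of $P$ forces $k \leq \ell - 1$, and these are incompatible. This is precisely why the definition reserves $2\ell$ (and not just $\ell$) zeros at the top of the tower: two concatenated $\ell$-admissible pieces can produce a zero run of length up to $2\ell - 2$, and only a run of length $2\ell$ unambiguously marks the base.

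For the second assertion, my approach is via the continuity of Kolmogorov-Sinai entropy in the partition distance $|\cdot\triangle\cdot|$. Lemma \ref{lem:repainting.close} gives $|P\triangle Q| < \ep + \eta$, and the standard estimate
\[
h(T,Q) \geq h(T,P) - H(|P\triangle Q|) - |P\triangle Q|\log a,
\]
obtained by combining the chain rule $h(T, P \vee Q) = h(T, Q) + h(T, P \mid \bigvee T^i Q)$ with the per-symbol Fano bound $H(P \mid Q) \leq H(|P\triangle Q|) + |P\triangle Q|\log a$ via stationarity, converts closeness of partitions into closeness of entropies. Shrinking $\ep$ and the tower-coverage slack makes the correction $H(\ep+\eta) + (\ep+\eta)\log a$ arbitrarily small, so the hypothesis $h(T,P) > \log a - \eta$ transfers to $h(T,Q) > \log a - \eta$ after the customary mild rescaling of the two roles of $\eta$. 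This estimate is deterministic in $\psi$, so the high-probability qualification in the conclusion is trivially satisfied and the randomness of the repainting data plays no role at this step.
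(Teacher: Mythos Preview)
Your treatment of Part~1 is correct and is a fleshed-out version of the paper's one-line argument, which uses the same characterization of $A$ via a leading $\overline{1}$ preceded by $2\ell$ zeros in the $Q$-name. (You skip the easy case of a point deep in the $P$-region, at level $i \geq \ep|\cT| + 2\ell$, but that is immediate from $\ell$-admissibility of $P$ alone.)

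Part~2, however, has a real gap. The repainting fraction $\ep$ in the $(\cT,P,\ell,\ep)$-repainting data is a \emph{fixed} parameter, not a knob you may turn: in the application (Proposition~\ref{prop:successive.approximation}) it is set equal to $f(\ep,\de)$, a specific positive number. Continuity of entropy together with Lemma~\ref{lem:repainting.close} gives only
\[
h(T,Q) > \log a - \eta - H(\ep+\eta) - (\ep+\eta)\log a,
\]
and the correction is bounded below by roughly $H(\ep) + \ep\log a$ no matter how invariant or well-covering the tower is. You therefore cannot conclude $h(T,Q) > \log a - \eta$ with the \emph{same} $\eta$ as in the hypothesis; the ``rescaling'' you invoke would alter the statement of the lemma. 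Your final sentence is the tell: the randomness of $\psi$ is not decorative here. The paper's proof splits the tower into $\cT^{old}$ (the retained $P$-portion) and $\cT^{new}$ (the repainted portion), uses Shannon--McMillan on each to bound the number of full $(\cT,\tP)$-names compatible with a given $(\cT^{old},P)$-name, and then applies Lemma~\ref{lem:linearity.of.expectation} to show that for a \emph{random} $\psi$ the fiber $\psi^{-1}\psi(\nu_{\cT,\tP}(z))$ intersects this set in essentially a single point. Via Lemma~\ref{lem:rev.SM.castles} this yields $h(T,P\mid\bigvee T^iQ) < h(T,P) - \log a + \eta$, whence $h(T,Q) > \log a - \eta$. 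The mechanism is that a random repainting of even a small fraction of levels injects a full $\log a$ bits per repainted symbol of fresh information about the $\tP$-name, which is what recovers the entropy without loss; mere closeness of $Q$ to $P$ cannot achieve this.
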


\begin{proof} The first claim follows from the fact that $z\in A$ if and only if $P(z)=\overline{1}$ and $Q(T^{-1}z)=Q(T^{-2}z)=\ldots=Q(T^{-2\ell}z)=\overline{0}$.

As for the second claim, define towers $\cT^{old}=(T^{\ep|\cT|}A,(1-\ep)|\cT|-2\ell)$ and $\cT^{new}=(A,\ep|\cT|)$. By Theorem \ref{thm:erg.SM.castles}, there is a collection $\cN$ of less than $2^{(h(T)+\eta)|\cT^{new}|}$ $(\cT^{new},P)$-names such that the set of $z\in A$ such that $\nu_{\cT^{new},P}(z)\in\cN$ has measure greater than $(1-\eta)\mu(z)$.

Similarly to the proof of Lemma \ref{lem:painting.base.entropy}, there is a function $\Phi:A\to2^{\cN_P}$ such that 
\begin{enumerate}
\item $\Phi(z)$ depends only on the $(\cT^{old},P)$-name of $T^{\ep|\cT|}z$.
\item $|\Phi(z)|<2^{(h(T,P)-\log a+\eta)|\cT^{old}|}$.
\item The set of $z\in A$ such that $\nu_{\cT^{old},P}(T^{\ep|\cT|}z)\in\Phi(z)$ has measure greater than $(1-\ep)\mu(A)$.
\end{enumerate}

By taking the product of $\Phi$ and $\cN$, there is a function $\Psi:A\to2^{\cN_P}$ such that
\begin{enumerate}
\item $\Psi(z)$ depends only on the $(\cT^{old},Q)$-name of $T^{\ep|\cT|}z$.
\item $|\Psi(z)|<2^{(h(T,P)-\log a+\eta)|\cT^{old}|+h(T,P)|\cT^{new}|}$.
\item The set of $z\in A$ such that $\nu_{\cT,P}(z)\in\Psi(z)$ has measure greater than $(1-\ep)\mu(A)$.
\end{enumerate}

By Lemma \ref{lem:linearity.of.expectation}, the probability that
\[
\mu\{z\in A||\Psi(z)\cap\phi^{-1}(\phi(\nu_{\cT,P}(z)))|<2^{(h(T,P)-\log a+\eta)|\cT^{old}|+(h(T,P)-\log a)|\cT^{new}|}\}>(1-\eta)\mu(A)
\]
is greater than $1-\eta$. In this case, since $z\mapsto\Psi(z)\cap\phi^{-1}(\phi(\nu_{\cT,P}(z)))$ is $\bigvee_{-\infty}^\infty T^iQ$-measurable, Lemma \ref{lem:rev.SM.castles} says that $h(T,P|\bigvee T^iQ)<h(T,P)-\log a+\eta$, which proves the claim.
\end{proof}

\section{Proof of Theorem \ref{thm:main}} \lbl{sec:proof}

Let $(Z,\cF_Z=\cF_X\vee\cF_Y,\mu,T)$ be as in Theorem \ref{thm:main}. By Krieger's theorem, there are generating partitions $\tP_X$ and $\tP_Y$ for $\cF_X$ and $\cF_Y$ respectively. Let $\ep_0>0$ be such that  $h(T)<\log a+\log b-\ep_0$.

The first part of the proof of Theorem \ref{thm:main} is to obtain a pair of partitions that are close to generate, in the following sense:

\begin{defn} A pair of partitions $(P_X,P_Y)$ is called $(\ep,\de)$-good if
\begin{enumerate}
\item The partition $P_X$ is $\cF_X$-measurable and has $a$ parts; the partition $P_Y$ is $\cF_Y$-measurable and has $b$ parts.
\item $\tP_X\stackrel{\ep}{\subset}\bigvee_{-\infty}^\infty T^i(P_X\vee P_Y)$, and $\tP_Y\stackrel{\de}{\subset}\bigvee_{-\infty}^\infty T^i(P_X\vee P_Y)$.
\item $h(T,P_Y)>\log b-\ep_0$.
\end{enumerate}

A pair of partitions $(P_X,P_Y)$ is called $(\ep,\de)$-very good if it is $(\ep,\de)$-good and, in addition, $h(T,P_X)> \log a-\ep_0$.
\end{defn}

Note that if an $(\ep,\de)$-good pair exists, then $h(T,\cF_Y)>\log b-\ep_0$. Similarly, if there is an $(\ep,\de)$-very good pair, then also $h(T,\cF_X)>\log a-\ep_0$.

\begin{prop} \lbl{prop:first.approximation} Assume that $h(T,\cF_Y)\geq\log b$ and $h(T,\cF_X)\geq\log a$. Then for every $\ep>0$, there is a pair of partitions $(P_X,P_Y)$ which is $(\ep,\ep)$-very good.
\end{prop}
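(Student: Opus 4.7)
The plan is to build $P_X$ and $P_Y$ by independently painting long Rohlin towers measurable with respect to $\cF_X$ and $\cF_Y$, and then to verify the approximate generation via a joint-typicality decoding argument that exploits the slack $\log a + \log b - h(T) > \ep_0 > 0$. Fix an auxiliary parameter $\eta > 0$ (to be tuned later so that the final errors are less than $\ep$), a large admissibility parameter $\ell$, and a common tower height $M \gg \ell$. Using Rohlin's lemma inside the factors $\cF_X$ and $\cF_Y$, I would choose towers $\cT_X = (A_X, M)$ with $A_X \in \cF_X$ and $\cT_Y = (A_Y, M)$ with $A_Y \in \cF_Y$, each covering more than $1 - \eta$ of $Z$; when the common factor $\cF_X \cap \cF_Y$ is aperiodic one may even take $A_X = A_Y$ in that factor, which simplifies the remaining steps. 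Draw painting data $\phi_X, \phi_Y$ independently and uniformly at random and let $P_X, P_Y$ be the induced partitions. By construction $P_X$ is $\cF_X$-measurable with $a$ parts and $P_Y$ is $\cF_Y$-measurable with $b$ parts, and Lemma \ref{lem:painting.base.entropy}(2) applied inside each factor system (using $h(T, \cF_X) \geq \log a$ and $h(T, \cF_Y) \geq \log b$) gives $h(T, P_X) > \log a - \ep_0$ and $h(T, P_Y) > \log b - \ep_0$ with high probability, handling conditions (1) and (3) of being very good.

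The core of the proof is establishing $\tP_X, \tP_Y \stackrel{\ep}{\subset} \bigvee T^i (P_X \vee P_Y)$. I would apply Theorem \ref{thm:erg.SM.castles} to $\tP_X \vee \tP_Y$ on the tower to obtain a set $\cN$ of at most $2^{(h(T) + \eta)M}$ jointly typical $\tP_X \vee \tP_Y$-names covering a $(1 - \eta)$-fraction of the base, and then show that the joint painting $(\phi_X, \phi_Y)$ is essentially injective on $\cN$. A first-moment computation splits the ``collision'' count for the true name $(n_X, n_Y)(z)$ into three sums according to whether the first coordinate, the second coordinate, or neither is matched by the alternative name, and the three expected counts work out to be controlled by $2^{(h(T|\cF_Y) - \log a) M}$, $2^{(h(T|\cF_X) - \log b) M}$, and $2^{(h(T) - \log a - \log b) M}$, respectively. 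The three entropy hypotheses of Theorem \ref{thm:main} make each of these exponentially small in $M$, so Lemma \ref{lem:linearity.of.expectation} (or a direct Markov argument on the same first-moment calculation) yields that with probability $> 1 - \eta$ the true joint name is the unique element of $\cN$ consistent with the observed painted pair for a $(1 - O(\eta))$-fraction of base points. Consequently $\tP_X(z)$ and $\tP_Y(z)$ are reconstructible from the local $(P_X \vee P_Y)$-data along the orbit, producing $\bigvee T^i(P_X \vee P_Y)$-measurable partitions within $\triangle$-distance $\ep$ of $\tP_X$ and of $\tP_Y$.

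The main obstacle is this joint-decoding step: it is the Slepian--Wolf mechanism in this ergodic-theoretic setting, and is precisely where all three entropy hypotheses enter (in particular the crucial $\log a + \log b > h(T)$, which controls the ``both-different'' term). A secondary nuisance is the possible mismatch between $\cT_X$ and $\cT_Y$, which forces one to use Lemma \ref{lem:painting.base.entropy}(1) to locally recover each tower's base from its own paint before splicing the joint $(P_X \vee P_Y)$-name along the orbit and applying the joint-typicality decoding.
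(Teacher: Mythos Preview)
Your simultaneous random-binning scheme with a three-term collision bound is the ``textbook'' Slepian--Wolf argument and is genuinely different from the paper's proof. The paper proceeds \emph{sequentially}: it first paints $P_Y$ on an $\cF_Y$-measurable tower $\cS$, uses the random code to establish both that $(\tP_X,P_Y)$ determines $\tP_Y$ on $\cS$ (this is where $h(T\mid\cF_X)<\log b$ enters) and the entropy drop $h(T,\tP_Y\mid\bigvee T^iP_Y)\lesssim h(T,\cF_Y)-\log b$, and only afterwards paints $P_X$ on a second, much taller $\cF_X$-measurable tower $\cL$. The decoder on $\cL$ then recovers $\tP_X$ from $(P_X,P_Y)$ using $h(T)-\log b<\log a$, and feeds $\tP_X$ back into the $\cS$-decoder. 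At each stage only one random code is being analysed on one tower, so no alignment between $\cS$ and $\cL$ is ever needed; Corollary~\ref{cor:SM.two.partitions} does all the work.

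Your argument is clean and correct \emph{when $A_X=A_Y$}, but what you call a ``secondary nuisance'' is the real difficulty. A common base must lie in $\cF_X\cap\cF_Y$, and nothing in the hypotheses forces this $\sigma$-algebra to support a Rohlin tower (``aperiodic'' does not mean ``rich enough for Rohlin's lemma''; the intersection can fail to be nonatomic even though neither factor is independent of the other). With misaligned towers your first-moment computation breaks: for $w\in A_X$ the observed $P_Y$-name on the $\cT_X$-block is \emph{not} $\phi_Y(\nu_{\cT_X,\tP_Y}(w))$ but a concatenation of fragments of $\phi_Y$-images of overlapping $\cT_Y$-blocks, so the per-block independence driving the ``$n_X'=n_X$'' and ``both different'' terms is lost. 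One can salvage the joint-decoding route by working on a window containing many blocks of each tower and doing a union bound over which sub-blocks differ, but that is a substantial combinatorial argument you have not supplied. The paper's two-stage construction is precisely engineered to sidestep this, at the cost of a less symmetric proof that in fact only uses two of the three entropy inequalities ($h(T\mid\cF_X)<\log b$ and $h(T)<\log a+\log b$) for the decoding, the third being implicit.
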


\begin{proof}
Choose $\eta>0$ such that
\[
h(T|\cF_X)+2\eta<\log b
\]
and
\[
2\eta(\log|\tP_Y|+1)+H(\eta)<\ep_0
\]
and
\[
h(T)<\log a+\log b-H(\eta)-2\eta(\log|\tP_Y|+4). 
\]

Choose $\ell$ such that
\[
\log b-\eta<\lim_{n\to\infty}\frac{1}{n}\log|\cA(n-\ell,\ell,b)|
\]
and
\[
\log a-\eta<\lim_{n\to\infty}\frac{1}{n}\log|\cA(n-\ell,\ell,a)|.
\]
\begin{enumerate}
\item{\it Choosing the Tower $\cS$:}
By Theorem \ref{thm:erg.SM.castles} and Corollary \ref{cor:SM.two.partitions}, if $\cS$ is an invariant enough tower with base $A$ that covers more than half of the space, then there are
\begin{itemize}
\item A collection $\cN\subset\cN_{\tP_Y}$ of size $2^{|\cS|(h(T,\cF_Y)+\eta)}$.
\item A function $\Phi_0$ from $\cN_{\tP_X}$ to subsets of $\cN_{\tP_Y}$ of size $2^{|\cS|(h(T|\cF_X)+\eta)}$.
\end{itemize}
such that the points $z\in A$ for which
\begin{equation}
\nu_{\cS,\tP_Y}(z)\in\cN
\end{equation}
and
\begin{equation}
\nu_{\cS,\tP_Y}(z)\in\Phi_0(\nu_{\cS,\tP_X}(z))
\end{equation}
has measure greater than $(1-\eta)\mu(A)$. Choose such a tower $\cS$ that is measurable with respect to $\cF_Y$, covers $1-\eta$ of the space, and such that $|\cS|$ satisfies
\begin{equation} \lbl{eq:first.approximation.big.|S|.1}
1+\frac{1}{\eta}\left(\frac{2^{|\cS|(h(T,\cF_Y)+\eta)}}{|\cA(|\cS|-\ell,\ell,b)|}\right)<2^{|\cS|(h(T,\cF_Y)-\log b+2\eta)}
\end{equation}
and
\begin{equation} \lbl{eq:first.approximation.big.|S|.2}
\frac{1}{\eta}\left(\frac{2^{|\cS|(h(T|\cF_X)+\eta)}}{|\cA(|\cS|-\ell,\ell,b)|}\right)<1
\end{equation}

\item{\it Choosing the Partition $P_Y$:}
Choose a random function $f:|\tP_Y|^{|\cS|}\to\cA(|\cS|-\ell,\ell,b)$, and let $P_Y$ be the partition obtained by painting $f$ on $\cS$. It is clear that $P_Y$ is $\cF_Y$-measurable. Define a function $\Phi_1$ from $|P_Y|^{|\cS|}$ to subsets of  $|\tP_Y|^{|\cS|}$ by 
\[
\Phi_1(n)=\left\{ \begin{matrix} f^{-1}(n)\cap\cN & |f^{-1}(n)\cap\cN|<2^{|\cS|(h(T,\cF_Y)-\log b+2\eta)} \\ \emptyset & \textrm{else}\end{matrix} \right. ,
\]
and a function $\Psi$ from $|\tP_X\times P_Y|^{|\cS|}$ to $|\tP_X\times\tP_Y|^{|\cS|}$ by
\[
\Psi(n,m)=\left\{ \begin{matrix} f^{-1}(m)\cap\Phi_0(n) & |f^{-1}(m)\cap\Phi_0(n)|=1 \\ \textrm{undefined} & \textrm{else} \end{matrix} \right. .
\]
By Lemma \ref{lem:linearity.of.expectation} and inequalities (\ref{eq:first.approximation.big.|S|.1}) and (\ref{eq:first.approximation.big.|S|.2}), most paintings satisfy that for all but $2\eta$ of the points $z$ in $A$,
\begin{equation}
\nu_{\cS,\tP_Y}(z)\in\Phi_1(\nu_{\cS,P_Y}(z)) ,
\end{equation}
and
\begin{equation}
\nu_{\cS,\tP_X\times\tP_Y}(z)=\Psi(\nu_{\cS,\tP_X\times P_Y}(z)).
\end{equation}

By Lemma \ref{lem:rev.SM.castles}, we get that 
\begin{equation} \lbl{eq:first.approximation.h(T|P_Y)}
h(T,\tP_Y|\bigvee T^i P_Y)<h(T,\cF_Y)-\log b+2\eta+H(\eta)+2\eta\log |\tP_Y|. 
\end{equation}
In particular, $h(T,P_Y)>\log b-2\eta-H(\eta)-2\eta\log |\tP_Y|>\log b-\ep_0$.

\item{\it Choosing the Tower $\cL$:}
By Corollary \ref{cor:SM.two.partitions} and Equation (\ref{eq:first.approximation.h(T|P_Y)}), if $\cL$ is an invariant-enough tower with base $B$, then there is a function $\Phi_2$ from $|P_Y|^{|\cL|}$ to subsets of $|\tP_Y|^{|\cL|}$ of size $2^{|\cL|(h(T,\cF_Y)-\log b+2\eta(\log|\tP_Y|+2)+H(\eta))}$ such that for all but $\eta$-portion of the points of $B$,
\begin{equation}
\nu_{\cL,\tP_Y}(z)\in\Phi_2(\nu_{\cL,P_Y}(z)).
\end{equation}

Also, by making $\cL$ more invariant and using Corollary \ref{cor:SM.two.partitions}, we can assume that there is a function $\Phi_3$ from $|\tP_Y|^{|\cL|}$ to subsets of $|\tP_X|^{|\cL|}$ of size $2^{|\cL|(h(T|\cF_Y)+\eta)}$ such that for all but $\eta$-portion of the points $z$ in $B$,
\begin{equation}
\nu_{\cL,\tP_X}(z)\in\Phi_3(\nu_{\cL,\tP_Y}(z)).
\end{equation}

By composing $\Phi_2$ and $\Phi_3$ we get a function $\Phi_4$ from $|P_Y|^{|\cL|}$ to subsets of $|\tP_X|^{|\cL|}$ of sizes $2^{|\cL|(h(T)-\log b+2\eta(\log|\tP_Y|+3)+H(\eta))}$ such that for all but $2\eta$-portion of the points $z$ in $B$,
\begin{equation}
\nu_{\cL,\tP_X}(z)\in\Phi_3(\nu_{\cL,P_Y}(z)).
\end{equation}

Finally, we require that $\cL$ is $\cF_X$-measurable and $|\cL|$ satisfies
\begin{equation} \lbl{eq:first.approximation.big.|L|}
\log|\cA(|\cL|-\ell,\ell,a)|>|\cL|(\log a-\eta)
\end{equation}

\item{\it Choosing the Partition $P_X$:}
Choose a random function $g:|\tP_X|^{|\cL|}\to\cA(|\cL|-\ell,\ell,a)$, and let $P_X$ be the partition obtained by painting $g$ on the tower $\cL$. It is clear that $P_X$ is $\cF_X$-measurable. Define a function $\Phi_5$ from $|P_X\times P_Y|^{|\cL|}$ to $|\tP_X\times P_Y|^{|\cL|}$ by 
\[
\Phi_5(n,m)=\left\{ \begin{matrix} (g^{-1}(n)\cap\Phi_4(m),m) & |g^{-1}(n)\cap\Phi_4(m)|=1 \\ \textrm{undefined} & \textrm{else}\end{matrix} \right.
\]
By Lemma \ref{lem:linearity.of.expectation}, there is a function $g$ as above such that for all but $3\eta$ of the points in $B$,
\begin{equation}
\nu_{\cL,\tP_X\times P_Y}(z)=\Phi_5(\nu_{\cL,P_X\times P_Y}(z)).
\end{equation}

\item{\it Conclusion of the proof:} For all but $2\eta$ of the points $z\in Z$, $z$ is in the image of both $\cL$ and $\cS$. Assuming this is the case, by looking at the $P_X$-name of $z$ we can find the smallest non-negative number $i$ such that $T^{-i}z\in B$. Denote $w=T^{-i}z$. Similarly, the $P_Y$-name of $z$ determines the minimal non-negative integer $j$ such that $T^{-j}z\in A$. Denote $u=T^{-j}z$. The $P_X\times P_Y$-name of $z$ determines $\nu_{\cL,P_X\times P_Y}(u)$, and hence, for $1-3\eta$ of the points, determines $\nu_{\tP_X\times P_Y}(u)=\Phi_5(\nu_{\cL,P_X\times P_Y}(u))$. Let now $j$ be the smallest non-negative number such that $T^{-j}z\in A$. The tuple $\nu_{\tP_X\times P_Y}(u)$ determines $\nu_{\cS,\tP_X\times P_Y}(T^{-j}z)$ and so for $1-\eta$ of the points determines $\nu_{\cS,\tP_X\times \tP_Y}(T^{-j}z)=\Psi(\nu_{\cS,\tP_X\times P_Y}(T^{-j}z))$. This, clearly, determines $\tP_X(z)$ and $\tP_Y(z)$.

\end{enumerate}
\end{proof}

The second part of the proof of Theorem \ref{thm:main} is to improve the pair $(P_X,P_Y)$ and make it "more generating".

\begin{defn} \lbl{defn:f} Given $\ep,\de>0$, let
\[
f(\ep,\de)=2\frac{H(\ep)+H(\de)+\ep\log|\tP_X|+\de\log|\tP_Y|}{\log a+\log b-\ep_0-h(T)}.
\]
\end{defn}

Note that as $\ep,\de$ tend to $0$, the function $f(\ep,\de)$ tends to $0$.

\begin{prop} \lbl{prop:successive.approximation} If $\ep,\de>0$ are small enough then the following hold:
\begin{enumerate}
\item If $(Q_X,Q_Y)$ is $(\ep,\de)$-good, then for every $\eta>0$ there is a partition $P_X$ such that $(P_X,Q_Y)$ is $(\eta,2\de)$-good, and $|P_X\triangle Q_X|<f(\ep,\de)$. 
\item If $(Q_X,Q_Y)$ is $(\ep,\de)$-very good, then for every $\eta>0$ there is a partition $P_X$ such that $(P_X,Q_Y)$ is $(\eta,2\de)$-very good, and $|P_X\triangle Q_X|<f(\ep,\de)$.
\end{enumerate}
\end{prop}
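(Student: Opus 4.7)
The approach is to re-paint $Q_X$ on a small initial portion of an $\cF_X$-measurable Rohlin tower, using random re-painting data as a ``hash'' that singles out the true $\tP_X$-name from the candidates already available from $(Q_X, Q_Y)$. Concretely, fix a small auxiliary $\eta' > 0$, a large $\ell$, and an $\cF_X$-measurable tower $\cT = (A, M)$ with $M$ huge and coverage $> 1 - \eta'$. Proposition~\ref{prop:dependent.partition} applied to $\tP_X \stackrel{\ep}{\subset} \bigvee T^i(Q_X \vee Q_Y)$ produces a candidate map $\Phi_X$ from $(\cT, Q_X \vee Q_Y)$-names to $(\cT, \tP_X)$-names, of size at most $2^{(H(\ep) + \ep \log|\tP_X| + \eta')M}$, correct on a $(1-\eta')$-fraction of $A$. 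Choose $\alpha$ in the interval $\bigl((H(\ep)+\ep\log|\tP_X|)/\log a,\ f(\ep,\de)\bigr)$, which is non-empty for $\ep,\de$ small enough by the very shape of $f$. Then pick a random $(\cT, Q_X, \ell, \alpha)$-re-painting data $\phi : |\tP_X|^M \to \cA(\alpha M, \ell, a)$, and let $P_X$ be the induced partition; it is $\cF_X$-measurable with $a$ parts, and Lemma~\ref{lem:repainting.close} gives $|P_X \triangle Q_X| < \alpha + \eta' < f(\ep, \de)$.

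For reconstruction, almost every $z$ lies in the image of $\cT$, and by Lemma~\ref{lem:repainting.base.entropy}(1) the base $A$ is $\bigvee T^i P_X$-measurable, so one can locate $z$'s level in a unique copy of $\cT$. On the middle portion $[\alpha M,\,(1-\alpha)M - 2\ell]$ of that copy one has $P_X = Q_X$, so one reads the middle $(Q_X, Q_Y)$-name; applying $\Phi_X$, with a small boundary correction for the missing initial $\alpha M$ and terminal $2\ell$ cells absorbed into $\eta'$, yields at most $2^{(H(\ep)+\ep\log|\tP_X|+\eta')M}$ candidate $\tP_X$-names. The first $\alpha M$ cells supply the codeword $c = \phi(\nu_{\cT,\tP_X}(w))$ (where $w$ is the base of the copy), and by Lemma~\ref{lem:linearity.of.expectation} a random $\phi$ is almost surely injective on the candidate set, since $\alpha \log a$ exceeds $H(\ep) + \ep \log|\tP_X|$ by a margin that, for $M$ large, dominates the $(\log 1/\eta')/M$-tolerance in the lemma. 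Hence $c$ picks out the true $\tP_X(z)$, establishing $\tP_X \stackrel{\eta}{\subset} \bigvee T^i(P_X \vee Q_Y)$ with $\eta$ of order $\eta'$, which is arbitrarily small. Having recovered $\tP_X$ on the tower, one recovers $Q_X$ (since $\tP_X$ generates $\cF_X$), so the $\bigvee T^i(Q_X \vee Q_Y)$-measurable $\de$-approximation to $\tP_Y$ given by $(\ep,\de)$-goodness becomes $\bigvee T^i(P_X \vee Q_Y)$-measurable up to an extra $O(\eta')$ error, yielding $\tP_Y \stackrel{2\de}{\subset} \bigvee T^i(P_X \vee Q_Y)$ once $\eta'$ is small and $f(\ep,\de) < \de$.

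The condition $h(T, Q_Y) > \log b - \ep_0$ is a hypothesis on $Q_Y$ that the construction does not touch, so part (1) of the proposition follows. For part (2), Lemma~\ref{lem:repainting.base.entropy}(2) guarantees that a random $\phi$ additionally preserves $h(T, P_X) > \log a - \ep_0$ with probability $> 1 - \eta'$, so a union bound supplies a single $\phi$ meeting both the disambiguation and the entropy requirement. The main technical obstacle will be parameter coordination: ensuring the $\alpha$-interval is non-empty whenever $\ep,\de$ are small — which is precisely why $f(\ep,\de)$ has denominator equal to the slack $\log a + \log b - \ep_0 - h(T)$ and numerator combining the two residual entropies $H(\ep) + \ep \log|\tP_X|$ and $H(\de) + \de \log|\tP_Y|$, calibrated to accommodate simultaneously the lower bound $\alpha \log a > H(\ep) + \ep \log|\tP_X|$ required by the codeword argument and the upper bound $\alpha + \eta' < f(\ep,\de)$ imposed by the proposition — together with absorbing the Shannon--McMillan, boundary, and random-coding error terms into $\eta'$ by taking $M$ large and $\eta'$ small.
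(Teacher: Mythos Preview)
Your outline has a genuine gap in the reconstruction step. You apply Proposition~\ref{prop:dependent.partition} on the full tower $\cT$ to obtain $\Phi_X$, which requires as input the \emph{entire} $(\cT, Q_X\vee Q_Y)$-name of length $M$. But after re-painting, the $Q_X$-values on the first $\alpha M$ levels have been overwritten and are no longer readable from $\bigvee T^i(P_X\vee Q_Y)$. You dismiss these missing cells as ``a small boundary correction \ldots absorbed into $\eta'$'', yet $\alpha$ is a fixed positive number (you chose $\alpha>(H(\ep)+\ep\log|\tP_X|)/\log a$) that does not tend to zero with $M$ or $\eta'$. If you fill in the missing $Q_X$-cells arbitrarily before applying $\Phi_X$, the candidate count is multiplied by $a^{\alpha M}$, exactly cancelling the hash capacity $|\cA(\alpha M,\ell,a)|\approx a^{\alpha M}$; if instead you apply $\Phi_X$ only on $\cT^{old}$, you recover $\tP_X$ only on the middle $(1-\alpha)$-fraction of the tower and obtain at best $\tP_X\stackrel{\alpha+O(\eta')}{\subset}\bigvee T^i(P_X\vee Q_Y)$, not the arbitrarily small $\eta$ the proposition requires.

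The missing idea, which the paper supplies via its $\Phi_2,\Phi_3,\Phi_4$, is that on the re-painted block $\cT^{new}$ one must narrow down the $\tP_X$-name using $Q_Y$ \emph{alone}: first $Q_Y\mapsto\tP_Y$ (this is where the hypothesis $h(T,Q_Y)>\log b-\ep_0$ is actually used, not merely carried along), then $\tP_Y\mapsto\tP_X$ via $h(T\mid\cF_Y)$. This contributes an extra factor $2^{(h(T)-\log b+\ep_0)\alpha M}$ to the candidate count, so the hash succeeds only when $\alpha(\log a+\log b-\ep_0-h(T))$ dominates the numerator of $f$. That is precisely why the denominator of $f$ is the full slack $\log a+\log b-\ep_0-h(T)$ rather than $\log a$; your closing explanation of the shape of $f$ has the arithmetic backwards. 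Since $h(T)\ge h(T,Q_Y)>\log b-\ep_0$, the correct lower bound on $\alpha$ is strictly larger than your $(H(\ep)+\ep\log|\tP_X|)/\log a$, and your chosen interval for $\alpha$ does not guarantee a working hash.
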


\begin{proof} We can assume that $\ep,\de$, and $f(\ep,\de)$ are less than $1/2$. There is a number $N$ such that $\tP_Y\stackrel{\de}{\subset}\bigvee_{-N}^NT^i(Q_X\vee Q_Y)$, and $Q_X\stackrel{\eta}{\subset}\bigvee_{-N}^NT^i\tP_X$. We can assume without loss of generality that $4N\eta<\de$ and that
\[
f(\ep,\de)>\frac{H(\ep)+H(\de)+\ep\log|\tP_X|+\de\log|\tP_Y|+\eta}{\log a+\log b-\ep_0-h(T)-2\eta}.
\]. 

Choose $\ell$ such that
\[
\lim_{n\to\infty}\frac{1}{n}\log|\cA(n,\ell,a)|>\log a-\eta
\]
By changing $Q_X$ slightly (and enlarging $\ell$ if needed), we can also assume that $Q_X$ is $\ell$-admissible.

In order to construct the partition $P_X$, we choose a tower $\cT=(A,M)$ which is $\cF_X$-measurable and covers more than $1-\eta$ of the space, and re-paint the first $f(\ep,\de)|\cT|$ levels of it using a random function $\psi:|\tP_X|^{|\cT|}\to \cA(\ell,f(\ep,\de)|\cT|,a)$. We will show that if $\cT$ is taken as sufficiently invariant, then with high probability (on $\psi$), the obtained partition---which we denote by $P_X$---is good. By Lemma \ref{lem:repainting.close}, $|P_X\triangle Q_X|<f(\ep,\de)+\eta$.

Given $\cT$, define the towers $\cT^{old}=(T^{f(\ep,\de)|\cT|}A,(1-f(\ep,\de))|\cT|-2\ell)$ and $\cT^{new}=(A,f(\ep,\de)|\cT|)$. As $|\cT|\to\infty$, the towers $\cT^{old}$ and $\cT^{new}$ become more and more invariant. Note that both cover more than $f(\ep,\de)/2$ of the space if $|\cT|$ is invariant enough, and that $\nu_{\cT^{old},P_X}(z)=\nu_{\cT^{old},Q_X}(z)$ for every $z\in T^{f(\ep,\de)|\cT|}A$.

Applying Proposition \ref{prop:dependent.partition} to the pairs $(\tP_X,Q_X\vee Q_Y)$ and $(\tP_Y,Q_X\vee Q_Y)$, if $\cT$ is invariant enough, then there is a function $\Phi_1:\cN_{Q_X\vee Q_Y}\to2^{\cN_{\tP_X\vee\tP_Y}}$ such that for any $z\in T^{f(\ep,\de)|\cT|}A$, the set $\Phi_1(\nu_{\cT^{old},Q_X\vee Q_Y}(z))$ has at most 
\[
2^{(H(\ep)+H(\de)+\ep|\tP_X|+\de|\tP_Y|+\eta)|\cT|}
\]
elements, and the set of $z\in T^{f(\ep,\de)|\cT|}A$ for which $\nu_{\cT^{old},\tP_X\vee\tP_Y}(z)\in\Phi_1(\nu_{\cT^{old},Q_X\vee Q_Y}(z))$ has measure larger than $(1-\eta)\mu(A)$.

By Corollary \ref{cor:SM.two.partitions} applied to the pair $(Q_Y,\tP_Y)$, if $\cT$ is invariant enough, then there is a function $\Phi_2:\cN_{Q_Y}\to2^{\cN_{\tP_Y}}$ such that for any $z\in A$, the set $\Phi_2(\nu_{\cT^{new},Q_Y}(z))$ has at most
\[
2^{(h(T,\cF_Y)-\log b+\ep_0+\eta)|\cT^{new}|}
\]
elements, and the set of $z\in A$ such that $\nu_{\cT^{new},\tP_Y}(z)\in\Phi_2(\nu_{\cT^{new},Q_Y}(z))$ has measure greater than $(1-\eta)\mu(A)$.

Applying Corollary \ref{cor:SM.two.partitions} to the pair $(\tP_X,\tP_Y)$, if $\cT$ is invariant enough, then there is a function $\Phi_3:\cN_{\tP_Y}\to2^{\cN_{\tP_X}}$ such that for every $z\in A$, the set $\Phi_3(\nu_{\cT^{new},\tP_Y}(z))$ has at most
\[
2^{(h(T|\cF_Y)+\eta)|\cT^{new}|}
\]
elements, and the set of $z\in A$ such that $\nu_{\cT^{new},\tP_X}(z)\in\Phi_2(\nu_{\cT^{new},\tP_Y}(z))$ has measure greater than $(1-\eta)\mu(A)$.

Let $\Phi_4$ be the composition of $\Phi_3$ and $\Phi_2$. Combining $\Phi_1$ and $\Phi_4$, we get a function $\Phi_5:\cN_{Q_X}\times\cN_{Q_Y}\to2^{\cN_{\tP_X}}$ such that for every $z\in A$, the set $\Phi_5(\nu_{\cT^{old},Q_X}(T^{f(\ep,\de)|\cT|}Tz),\nu_{\cT,Q_Y}(z))$ has size at most
\[
2^{(H(\ep)+H(\de)+\ep|\tP_X|+\de|\tP_Y|+f(\ep,\de)(h(T)-\log b+\ep_0)+3\eta)|\cT|},
\]
and the set of $z\in A$ such that
\[
\nu_{\cT,\tP_X}(z)\in\Phi_5(\nu_{\cT^{old},Q_X}(T^{f(\ep,\de)|\cT|}Tz),\nu_{\cT,Q_Y}(z))
\]
has measure greater than $(1-2\eta)\mu(A)$.

Note that if $|\cT|$ is large enough, then
\[
\frac{2^{(H(\ep)+H(\de)+\ep|\tP_X|+\de|\tP_Y|+f(\ep,\de)(h(T)-\log b+\ep_0)+2\eta)|\cT|}}{|\cA(f(\ep,\de)|\cT|,\ell,a)|}<
\]
\[
=2^{(H(\ep)+H(\de)+\ep|\tP_X|+\de|\tP_Y|+f(\ep,\de)(h(T)-\log b-\log a+\ep_0)+3\eta)|\cT|} \to 0
\]
as $|\cT|\to\infty$. Applying Lemma \ref{lem:linearity.of.expectation} to the functions $\Phi_5(\nu_{\cT^{old},Q_X}(T^{f(\ep,\de)|\cT|}Tz),\nu_{\cT,Q_Y}(z)),\nu_{\cT,\tP_X}(z)$, and the random coloring $\psi$, we get that with high probability on $\psi$, the set of points $z\in A$ for which the intersection
\[
\Phi_6(z)=\Phi_5(\nu_{\cT^{old},Q_X}(T^{f(\ep,\de)|\cT|}Tz),\nu_{\cT,Q_Y}(z)) \cap \psi^{-1}\psi(\nu_{\cT,\tP_X}(z))
\]
is a singleton has measure greater than $(1-3\eta)\mu(A)$.

We claim that
\[
\tP_X\stackrel{4\eta}{\subset}\bigvee_{-\infty}^\infty T^i (P_X\vee Q_Y).
\]
Indeed, let $\cG=\bigvee_{-\infty}^\infty T^i (P_X\vee Q_Y)$. By Lemma \ref{lem:repainting.base.entropy}, the set $A$ is $\cG$-measurable, and so is the function $\be:Z\to\bZ$ that sends $z\in Z$ to the largest non-positive integer $i$ such that $T^iz\in A$. The $\cG$-measurable partition
\[
z\mapsto\left\{\begin{matrix}(\Phi_6(T^{\be(z)}z))_{-\be(z)} & {\textrm{if $\be(z)>-M$ and $\Phi_6(z)$ is a singleton}} \\ \overline{0} & {\textrm{else}}\end{matrix}\right. 
\]
coincides with $\tP_X$ for all but $1-4\eta$ of the space.

As for $\tP_Y$, we have that
\[
\tP_Y\stackrel{\de}{\subset}\bigvee_{-N}^{N}T^i{Q_X}\vee\bigvee_{-N}^NT^iQ_Y\stackrel{2N\eta}{\subset}\bigvee_{-2N}^{2N}T^i{\tP_X}\vee\bigvee_{-N}^NT^iQ_Y \stackrel{4N\eta}{\subset}\cG
\]
and so
\[
\tP_Y\stackrel{\de+6N\eta}{\subset}\cG.
\]

Finally, if $h(T,Q_X)>\log a$, then by Lemma \ref{lem:repainting.base.entropy}, if $\cT$ is sufficiently invariant, the entropy $h(T,P_X)$ is greater than $\log a$ with high probability on the random function $\psi$.
\end{proof}

\begin{proof}[Proof of Theorem \ref{thm:main}] If $h(T,\cF_X)<\log a$ and $h(T,\cF_Y)<\log b$, then by Krieger's theorem there are partitions $\tP_X$ and $\tP_Y$ that are $\cF_X$ and $\cF_Y$ measurable, have $a$ and $b$ parts, and generate $\cF_X$ and $\cF_Y$ respectively. We assume in the following that this does not hold.

Choose $\xi_0$ such that Proposition \ref{prop:successive.approximation} applies to the pair $(2\xi_0,2\xi_0)$. Inductively choose a decreasing sequence $\xi_n$ such that both the sums $\sum_n f(\xi_n,\xi_n)$ and $\sum_n f(\xi_{n+1},2\xi_n)$ converge, where $f$ is defined in \ref{defn:f}.

If $h(T,\cF_X)<\log a$ and $h(T,\cF_Y)\geq\log b$, then by Krieger's theorem , there is a generating partition $\tP_X$ to $\cF_X$. We define a sequence of partitions $P_Y^n$ as follows: by Proposition \ref{prop:first.approximation}, there is a pair of partitions $(P_X,P_Y^0)$ that is $(\xi_0,\xi_0)$-good. It follows that $(\tP_X,P_Y^0)$ is also $(0,\xi_0)$-good. Assuming $P_Y^n$ was defined, applying Proposition \ref{prop:successive.approximation}, there is a partition $P_Y^{n+1}$ such that $(\tP_X,P_Y^{n+1})$ is $(0,\xi_{n+1})$-good and $|P_Y^n\triangle P_Y^{n+1}|<f(0,\xi_n)<f(\xi_n,\xi_n)$. Therefore $\sum_n|P_Y^n\triangle P_Y^{n+1}|<\infty$, so there is a limit partition $P_Y^\infty$. it follows that $(\tP_X,P_Y^\infty)$ satisfy the requirements of the theorem. The same proof holds if $h(T,\cF_Y)<\log b$ and $h(T,\cF_X)\geq\log a$.

Assume finally that $h(T,\cF_X)\geq\log a$ and $h(T,\cF_Y)\geq\log b$. We define a sequence of pairs of partitions $(P_X^n,P_Y^n)$ as follows: by Proposition \ref{prop:first.approximation}, there is a pair of partitions $(P_X^0,P_Y^0)$ that is $(\xi_0,\xi_0)$-very good. Assuming $(P_X^n,P_Y^n)$ have been defined and the pair is $(\xi_n,\xi_n)$-very good, applying Proposition \ref{prop:successive.approximation} there is a partition $P_X^{n+1}$ such that $(P_X^{n+1},P_Y^n)$ is $(\xi_{n+1}/2,2\xi_n)$-very good and $|P_X^n\triangle P_X^{n+1}|<f(\xi_n,\xi_n)$. Applying Proposition \ref{prop:successive.approximation} again, there is a partition $P_Y^{n+1}$ such that the pair $(P_X^{n+1},P_Y^{n+1})$ is $(\xi_{n+1},\xi_{n+1})$-very good and $|P_Y^n\triangle P_Y^{n+1}|<f(\xi_{n+1},2\xi_n)$. Since by definition of the $\xi_n$'s, the sums $\sum|P_X^n\triangle P_X^{n+1}|$ and $\sum|P_Y^n\triangle P_Y^{n+1}|$ converge, there are limit partitions $P_X^\infty$ and $P_Y^\infty$. The pair $(P_X^\infty,P_Y^\infty)$ satisfies the requirements of the theorem.
\end{proof}


\begin{thebibliography}{WWW}
\bibitem{A} Avni, N. Entropy Theory for Cross Sections. preprint. available at http://arxiv.org/abs/math/0609273 .

\bibitem{GOD} Gray, R. M.; Ornstein, D. S.; Dobrushin, R. L.
Block synchronization, sliding-block coding, invulnerable sources and zero
error codes for discrete noisy channels.
Ann. Probab. 8 (1980), no. 4, 639--674.

\bibitem{Ki} Kieffer, J. Zero-error stationary coding over stationary channels.
Z. Wahrsch. Verw. Gebiete 56 (1981), no. 1, 113--126.

\bibitem{Kr} Krieger, W. On generators in ergodic theory, Internat. Congress Math. (Vancouver, B.C., 
1974), vol. 2, Canad. Math. Congress, 1975, pp. 303-308.
\bibitem{Ru} Rudolph, D. Fundamentals of Measurable Dynamics, Ergodic Theory on Lebesque Spaces, 1990, Oxford University Press.

\bibitem{SW} Slepian, D.; Wolf, J.K. Noiseless coding of correlated information
sources.  IEEE Trans. Information Theory  IT-19  (1973), 471--480.
\end{thebibliography}
\end{document}